\theoremstyle{definition}
\newtheorem{thm}{Theorem}[section]
\newtheorem{lem}[thm]{Lemma}
\newtheorem{cor}[thm]{Corollary}
\newtheorem{prop}[thm]{Proposition}
\theoremstyle{definition}
\newtheorem{rem}[thm]{Remark}
\newtheorem{dfn}[thm]{Definition}
\newtheorem{ex}[thm]{Example}
\newcommand{\codim}{\mathrm{codim}}
\newcommand{\divisor}{\mathrm{div}}
\newcommand{\Ker}{\mathrm{Ker}}
\newcommand{\sing}{\mathrm{sing}}
\newcommand{\red}{\mathrm{red}}
\newcommand{\Nis}{\mathrm{Nis}}
\newcommand{\zar}{\mathrm{zar}}
\newcommand{\cdh}{\mathrm{cdh}}
\newcommand{\Spec}{\mathrm{Spec}}
\newcommand{\CH}{\mathrm{CH}}
\newcommand{\hocolim}{\mathrm{hocolim}}
\newcommand{\reg}{\mathrm{reg}}
\numberwithin{equation}{section}
\begin{document}

\title[The 0-th Suslin homology and Chow groups of 0-cycles with modulus]{the comparison of the 0-th Suslin homology and Chow groups of 0-cycles with modulus}

\author{Teppei Nakamura}
\address{Department of Mathematics, Faculty of Science, Kyoto University, Kyoto 606-8502, Japan}
\email{nakamura.teppei.27e@st.kyoto-u.ac.jp}

\date{\today}
\subjclass[2020]{Primary 14C25, Secondary 14F42, 19E15}
\keywords{$K$-theory, Algebraic cycles, Motivic cohomology}

\begin{abstract}
We show that, for a $K_0$-regular projective normal surface $X$ over a perfect field $k$ of positive characteristic and a reduced effective Cartier divisor $D\hookrightarrow X$, the Chow group of zero cycles on $X$ with modulus $D$ coincides with the 0-th Suslin homology of $X\setminus D$. Moreover, we show that this isomorphism also holds for a projective smooth scheme of any dimension with a reduced effective Cartier divisor.
\end{abstract}

\maketitle

\section{Introduction}
\subsection{Main results}
The goal of geometric class field theory is to describe the abelian fundamental group of a smooth scheme $X$ over a finite field $k$ by some invariants of $X$ itself. For an effective divisor $D$, the Chow group of 0-cycles with modulus $\CH_0(X|D)$ and the 0-th Suslin homology $H^S_0(X\setminus D)$ play important roles. More precisely, for a proper normal scheme $X$ and an effective Cartier divisor $D\hookrightarrow X$ on $X$ such that its complement $X\setminus D$ is smooth, Binda, Kerz, Krishna, and Saito (\cite{BKS}, \cite{Kerz-Saito}) showed the isomorphism
\begin{align*}
    \CH_0(X|D)_0\stackrel{\cong}{\longrightarrow} \pi^{ab}_1(X|D)_0,
\end{align*}
where each of $\CH_0(X|D)_0\subset \CH_0(X|D)$ and $\pi^{ab}_1(X|D)_0\subset \pi^{ab}_1(X|D)$ is the degree zero subgroup. On the side of the tame class field theory, for a smooth scheme $U$, Schmidt \cite{Schmidt}, \cite{SS} showed the isomorphism 
\begin{align*}
     H^S_0(U)_0\stackrel{\cong}{\longrightarrow} \pi^{t,ab}_1(U)_0,
\end{align*}
where the subscript $0$ means the same as above. Obviously, there is a commutative diagram of finite abelian groups
\begin{align*}
    \xymatrix{
\CH_0(X|D)_0 \ar[r]^{\cong} \ar@{->>}[d] & 
\pi^{ab}_1(X|D)_0 \ar@{->>}[d]\\
H^S_0(X\setminus D)_0 \ar[r]^{\cong} & \pi^{t,ab}_1(X\setminus D)_0
    }
\end{align*}
Naturally, when $D$ is reduced, there arises a question whether these two abelian fundamental groups coincide, or equivalently, the Chow group of 0-cycles with modulus and the 0-th Suslin homology coincide. Of course, if $X$ is a projective smooth curve, the isomorphism $\CH_0(X|D)_0 \cong H^S_0(X\setminus D)_0$ is obvious. However, in the higher dimensional case, it is not known that they coincide.

The main purpose of this paper is to prove the following theorem. 
\begin{thm}\label{main theorem}
  Let $k$ be a perfect field of positive characteristic. Let $X$ be a projective normal scheme over $k$, and let $D\hookrightarrow X$ be a reduced effective Cartier divisor on $X$. Assume that one of the following conditions holds:
 \begin{enumerate}
     \item $X$ is a $K_0$-regular surface over $k$.
     \item $X$ is a projective smooth scheme over $k$.
 \end{enumerate}
  Then the canonical map
  \begin{align*}
    \CH_{0}(X|D)\to H^{S}_{0}(X\setminus D)
  \end{align*}
 is an isomorphism.
\end{thm}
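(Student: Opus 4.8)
The plan is to realise both sides as quotients of the free abelian group $Z_0(U)$ on the closed points of $U=X\setminus D$ and to compare the two subgroups of relations. Writing $\nu\colon\bar C\to X$ for a finite morphism from a normal curve, $\CH_0(X|D)$ is the quotient of $Z_0(U)$ by the cycles $\divisor_{\bar C}(f)$ attached to rational functions with $f\equiv 1\pmod{\nu^*D}$, whereas $H^{S}_{0}(U)$ is the quotient by the boundaries $\partial Z=Z(0)-Z(1)$ of finite correspondences $Z\subset\mathbb{A}^1\times U$ that are finite surjective over $\mathbb{A}^1=\Delta^1$. As the canonical map is induced by the identity on $Z_0(U)$, it is enough to compare these relation subgroups. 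First I would verify well-definedness and surjectivity, i.e. that every modulus relation is a Suslin relation: given an admissible pair $(\bar C,f)$, the function $t=f/(f-1)\colon\bar C\to\bP^1$ has $t^{-1}(\infty)=f^{-1}(1)\supseteq\nu^{-1}(D)$, so on $W=\bar C\setminus f^{-1}(1)$ the map $t$ is finite onto $\mathbb{A}^1$ with $\nu(W)\subseteq U$, and the image of $[W]$ under $(t,\nu)$ is a finite correspondence whose boundary is exactly $\nu_*\divisor_{\bar C}(f)$. Hence the comparison map is a well-defined surjection.

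The real content is injectivity, that every Suslin relation is a modulus relation. Starting from $Z\subset\mathbb{A}^1\times U$ finite over $\mathbb{A}^1$, I would take its closure in $\bP^1\times X$ and normalise to obtain $\nu\colon\bar C\to X$ together with $g\colon\bar C\to\bP^1$ extending the $\mathbb{A}^1$-coordinate, so that $Z(0)=g^{-1}(0)$ and $Z(1)=g^{-1}(1)$ on $U$. Setting $h=g/(g-1)$ gives $\divisor_{\bar C}(h)=g^{-1}(0)-g^{-1}(1)=\partial Z$, and since $h-1=1/(g-1)$ the modulus condition $h\equiv 1\pmod{\nu^*D}$ holds precisely when $g$ has a pole along $\nu^{-1}(D)$, i.e. when the compactified curve meets $D$ only over $t=\infty$. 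Thus injectivity is equivalent to a moving statement: every boundary is the boundary of a finite correspondence whose closure in $\bP^1\times X$ is disjoint from $\mathbb{A}^1\times D$. This matching of the boundary behaviour of a Suslin correspondence with the value $1$ prescribed along $D$ is the step I expect to be the main obstacle.

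To produce such correspondences I would invoke the moving lemma for zero-cycles with modulus on smooth varieties, which in the smooth case (2) lets one deform any finite correspondence $Z$, rel its fibres over $0$ and $1$ and modulo boundaries, into one whose closure in $\bP^1\times X$ avoids $\mathbb{A}^1\times D$, i.e. one already satisfying the modulus condition. Positive characteristic enters because for a reduced divisor the modulus condition is a tameness condition: over a perfect field $f\equiv 1\pmod{\nu^*D}$ implies $f^{p^n}\equiv 1\pmod{p^n\nu^*D}$, so Frobenius lets one pass freely between the reduced modulus and its multiples, and, combined with tame reciprocity, this identifies the reduced-modulus relations with the tame Suslin relations. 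The positivity of the characteristic is therefore indispensable precisely in the lifting step, where it is used to reconcile the a priori arbitrary ramification of a Suslin correspondence along $D$ with the tame, reduced modulus.

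Finally, for the surface case (1) the scheme $X$ may be singular along a finite set, and I would reduce to (2) by resolution of singularities, available for excellent surfaces: choosing $\pi\colon\widetilde X\to X$ with reduced total transform $\widetilde D=\pi^{-1}(D)_{\red}$, the complement $\widetilde X\setminus\widetilde D$ agrees with $U$ outside finitely many points. The hypothesis that $X$ is $K_0$-regular is what makes the isomorphism descend: it controls the classes supported on the exceptional locus and forces $\CH_0(X|D)\to\CH_0(\widetilde X|\widetilde D)$ to be an isomorphism, while the same regularity guarantees that $H^{S}_{0}$ is unaffected by the resolution, so the smooth case applied to $(\widetilde X,\widetilde D)$ yields the result for $(X,D)$. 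Throughout, the decisive point remains the moving and lifting step of the second and third paragraphs, where both the positivity of the characteristic and the regularity hypotheses are essential.
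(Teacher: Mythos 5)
Your first paragraph is fine: with Schmidt's presentation of $H^S_0$ (which the paper takes as the definition) the map is induced by the identity on $Z_0(X\setminus D)$ and is surjective simply because $\Ker(\mathcal{O}^{\times}_{C,p^*D}\to\mathcal{O}^{\times}_{p^*D})\subseteq\Ker(\mathcal{O}^{\times}_{C,p^*D}\to\mathcal{O}^{\times}_{(p^*D)_{\red}})$. The gap is in the injectivity step, which is where the entire content of the theorem lives. You invoke ``the moving lemma for zero-cycles with modulus on smooth varieties'' to deform an arbitrary Suslin relation into a modulus relation; no such result is available in this generality, and none can be proved by a characteristic-free geometric moving argument, because the statement is \emph{false} in characteristic zero (the paper points to \cite[Theorem 4.4]{Binda-Krishna} for counterexamples). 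The only genuinely char-$p$ input you supply is the Frobenius observation $f\equiv 1\bmod (p^*D)_{\red}\Rightarrow f^{p^n}\equiv 1\bmod p^n(p^*D)_{\red}$; this shows that each element of $\Ker\bigl(\CH_0(X|D)\to H^S_0(X\setminus D)\bigr)$ is killed by a power of $p$, but it does not show the kernel vanishes, and ``tame reciprocity'' is not a substitute for that missing argument. The paper's injectivity proof is entirely $K$-theoretic: Bloch's formula identifies $\CH_0(X|D;X_{\sing})$ with $SK_0(X,D)$, the map $K_0(X,D)\to KH_0(X,D)$ is shown to be injective via the isomorphism $SK_1(D)\cong SKH_1(D)$ for reduced curves (which rests on Dennis--Stein's theorem that $K_2(E)\cong K_2(E_{\red})$ for Artinian rings over a perfect field of positive characteristic --- this is where char $p$ actually enters), and Cisinski's cdh-descent gives $KH(X,D)\simeq KH(\overline{X},(\pi^*D)_{\red})$; a sandwich of surjections and injections then forces everything to be an isomorphism.

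Your treatment of case (1) also runs the logic in the wrong direction. You assert that $K_0$-regularity ``forces $\CH_0(X|D)\to\CH_0(\widetilde X|\widetilde D)$ to be an isomorphism'' so that the singular case follows from the smooth case; this is unproved, and in the paper the corresponding comparison is an \emph{output}, not an input: one only obtains an injection $SK_0(X,D)\hookrightarrow SK_0(\overline{X},(\pi^*D)_{\red})$ (via $KH$), and the identification of the two Chow groups falls out at the very end of the diagram chase. The paper explicitly warns that rational equivalences cannot be compared on $X$ and $\overline{X}$ directly when $X$ is singular. Finally, in the smooth higher-dimensional case the paper does not use a moving lemma either, but a Bertini induction cutting $X$ by hypersurfaces containing the relevant curve down to a smooth surface (with a separate two-prime norm argument over finite fields); your proposal omits this reduction entirely.
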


 In the previous paper, Binda and Krishna  \cite[Theorem 1.1]{BK} showed the isomorphism of this canonical map under one of the following conditions:

  \begin{enumerate}
    \item $X$ is a projective smooth scheme over a field $k$, and $D$ is a simple normal crossing divisor on $X$.
    \item $k$ is a perfect field, $X$ is a projective smooth scheme over $k$, $D$ is a seminormal divisor on $X$, and $\dim(X)\leq 2$.
    \item $X$ is projective smooth scheme over an algebraically closed field of positive characteristic.
  \end{enumerate}
  
Hence, for a smooth projective scheme over a perfect field $k$ of positive characteristic, we can get rid of some hypotheses of $D$ in \cite[Theorem 1.1]{BK}. Moreover we can show that the canonical isomorphism also holds for $K_0$-regular normal surfaces.  Since a regular scheme is also a $K_n$-regular scheme for any $n\in \mathbb{Z}$, our results are the generalization of \cite[Theorem 1.1]{BK} for surfaces, in which $X$ is a projective smooth scheme over $k$.

\subsection{Overview of proofs}
First, we will show the main theorem for a surface $X$ and a reduced Cartier divisor $D$. The strategy of the proof is similar to that of Binda and Krishna, which reduces the computations of cycle groups into those of algebraic $K$-groups and homotopy $K$-groups. 

According to Bloch's formula, it has been known that the Chow group of 0-cycles with modulus is isomorphic to the subgroup of relative $K$-group $K_0(X,D)$. So we need to find out the connection between $K_0(X,D)$ and $KH_0(X,D)$. 

One of the critical points in this paper is Corollary \ref{SK-SKH iso}, which shows the injectivity $K_1(D) \hookrightarrow KH_1(D)$  for reduced curves $D$ over a perfect field of positive characteristic. A similar isomorphism was proven in \cite{BK} over a field of zero characteristic. The author proves Corollary \ref{SK-SKH iso} by using Dennis-Stein's result that $ K_2(E) \cong K_2(E_{\red})$ is an isomorphism for a dimension zero variety $E$ over a perfect field of positive characteristic.  This result is used to show the injectivity $K_0(X,D) \hookrightarrow KH_0(X,D)$. Notice that Corollary \ref{SK-SKH iso} is false for a perfect field of zero characteristic. Consequently, there exists a counterexample to our main results for varieties over a field of zero characteristic (see, for example, \cite[Theorem 4.4]{Binda-Krishna}). 

On the other hand, the computation of the 0-th Suslin homology  $H_0(X \setminus D)$ is slightly different. Once we take a smooth surface $\overline{X}$ and a simple normal crossing divisor $(\pi^*D)_{\red}$ for a resolution of singularities $\pi :\overline{X}\to X$ such that $\overline{X}\setminus \pi^*D\cong X\setminus D$, $H_0(X \setminus D)$ is isomorphic to $\CH_0(\overline{X}|(\pi^*D)_{\red})$ by the result of Binda and Krishna. Hence $H_0(X \setminus D)$ is isomorphic to the subgroup of $KH_0(\overline{X},(\pi^*D)_{\red})$.

Here, another important point in my proof is Theorem \ref{Cisinski} proved by Cisinski(\cite{Cisinski}), which implies $KH(X,D)\cong KH(\overline{X}, (\pi^*D)_{\red})$. This shows the isomorphism of $\CH_0(X|D)$ and $H_0(X \setminus D)$ in the surface case. Notice that the same argument as in \cite{BK} by Binda and Krishna does not work for singular surfaces. More precisely, if $X$ is singular, the position of the curve in $X$ which produces a rational equivalence of $H_0(X \setminus D)$ or $\CH_0(X|D)$ might be too bad to explore those rational equivalences in $X$.  Thus, it is necessary to consider those rational equivalences in $\overline{X}$ rather than in $X$ itself. Actually, the invariants of the pairs $(\overline{X},\pi^*D)$ and $(X,D)$ can not be the same. However, Cisinski's theorem guarantees the success of my proof.

In the higher dimensional case, we use Bertini theorems in order to reduce the higher dimensional case to the surface case. We prove it by an inductive argument, which is often used in the higher class field theory.

In \S~\ref{sec:cycles}, we recall the definition of Chow groups of 0-cycles with modulus and the 0-th Suslin homology. In \S~\ref{sec:review of K-theory}, we collect the results of algebraic $K$-theory and homotopy $K$-theory we will need. In \S~\ref{sec:calculation of K-theory}, we compute various algebraic and homotopy $K$-groups. In \S~\ref{sec:proof of 1}, we prove the main theorem (1). After the review of Bertini theorems in \S~\ref{sec:Bertini}, we prove the main theorem (2) in \S~\ref{sec:proof of 2}.

\subsection*{Acknowledgements}
The author would like to thank his advisor, Tetsushi Ito, for useful discussions and warm encouragement. He gave me an invaluable suggestions in the writing of this paper. This work was supported by JST SPRING, Grant Number JPMJSP2110.

\section{Review of zero cycles}\label{sec:cycles}

  Let $X$ be an integral scheme over $k$ which is regular in codimension 1, $D$ an effective Cartier divisor of $X$, and $A\hookrightarrow D$ a reduced closed subscheme of $\codim_X(A)\geq 2$. Assume that $X_{\sing} \subset D$. Let $Z_{0}(X \setminus D)$ be the abelian group formally generated by closed points of $X \setminus D$. We shall say that a finite morphism $p:C\to X$ is \emph{admissible for $A$} if $C$ is the integral normal curve, $p(C)\not\subset D$ , and  $p(C)\cap A=\varnothing$. If $A=\varnothing$, we simply call it admissible. Let $R_{0}(X,D;A)\subset Z_{0}(X\setminus D)$ be the subgroup generated by $p_*(\divisor(f))$, where $p:C\to X$ is $p$ is admissible for $A$ and $f\in \Ker(\mathcal{O}^{\times}_{C,p^{*}(D)}\to \mathcal{O}^{\times}_{p^{\ast}D})$.

\begin{dfn}
   $\CH_{0}(X|D;A)$ is defined to be $Z_{0}(X\setminus D)$/$R_{0}(X,D;A)$. If $A=\varnothing$, $\CH_{0}(X|D)$ is defined to be $\CH_{0}(X|D;\varnothing)$, which is generally called the \emph{Chow group of 0-cycles on $X$ with modulus $D$}.
\end{dfn}

  Let $R_{0}^S(X,D)\subset Z_{0}(X\setminus D)$ be the subgroup generated by $p_*(\divisor(f))$ , where $p:C\to X$ is admissible and  $f\in \Ker(\mathcal{O}^{\times}_{C,p^{*}D}\to \mathcal{O}^{\times}_{(p^{*}D)_{\red}})$. Here we shall define $0$-th Suslin homology. Of course this should be defined by $0$-th homology of a complex introduced by Suslin and Voevodsky \cite{SV}. But we have another convenient definition as follows, which was proved by Schmidt \cite[Theorem 5.1]{Schmidt}.
\begin{dfn}
 The \emph{0-th Suslin homology} $H_0^S(X\setminus D)$ is defined to be $Z_{0}(X,D)$/$R_{0}^S(X,D)$.
\end{dfn}

Let $p:X'\to X$ be a morphism between proper normal $k$-scheme, $D\hookrightarrow X$ an effective Cartier divisor on $X$ such that $D':=X'\times_X D$ is an effective Cartier divisor and ${X'}_{\sing}\subset D'$. Then, by the previous definitions, we can see immediately that there exist maps
\[
\CH_0(X'|D')\to \CH_0(X|D)\qquad \CH_0(X|D)\twoheadrightarrow H^S_0(X\setminus D)
\]
\[
\CH_0(X|mD)\twoheadrightarrow \CH_0(X|nD)\quad (0\leq n\leq m).
\]

\section{Review of algebraic K-theory and homotopy K-theory}\label{sec:review of K-theory}
\subsection{Recollection of algebraic $K$-theory}
Let $X$ be a quasi-projective scheme over a field $k$. Let $K(X)$ denote the non-connective $K$-theory spectrum of $X$ and $\mathcal{K}_{n,X}$ be the sheafification of $K_n$-group on $X_{\zar}$. In particular, there exists a spectral sequence due to Thomason-Trobaugh (see \cite[Theorem 10.3]{TT})
 \begin{align*}
      \xymatrix{
  E^{p,q}_{2,\zar} = H^p_{\zar}(X,\mathcal{K}_{q,X})  & \Rightarrow & K_{q-p}(X). }
 \end{align*}
 \begin{rem}
  In the above spectral sequence, we may replace the Zariski topology with Nisnevich topology. For details, see \cite[Theorem 10.8]{TT}.
\end{rem}
By the above spectral sequence, we have a homomorphism $K_0(X)\to H^0_{\zar}(X,\mathbb{Z}) $. We define $\widetilde{K_0(X)}$ to be the kernel of its morphism. Again, by the above spectral sequence, we have a canonical homomorphism $\widetilde{K_0(X)}\to H^1_{\zar}(X,\mathcal{O}^{\times}_{X})$. We let $SK_0(X)$ be the kernel of the homomorphism. Krishna \cite[Lemma 2.1]{Krishna} proved that the edge map $K_1(X)\to H^0_{\zar}(X,\mathcal{O}^{\times}_{X})$ induced by the above spectral sequence is surjective under the assumption $\dim(X)\leq 2$. Moreover this map splits functorially in $X$. Note that the similar argument holds for Nisnevich topoology and we get the isomorphism $H^1_{\zar}(X,\mathcal{O}^{\times}_{X})\cong H^1_{\Nis}(X,\mathcal{O}^{\times}_{X})$ by Hilbert's theorem 90. Hence if $\dim(X)\leq 2$, we have an isomorphism and an exact sequence 
\[
H^2_{\zar}(X,\mathcal{K}_{2,X})\cong  H^2_{\Nis}(X,\mathcal{K}_{2,X})\cong SK_0(X),
\]
\[
   \xymatrix{
   0 \ar[r] & SK_0(X) \ar[r] & \widetilde{K_0(X)} \ar[r] & H^1_{\zar}(X,\mathcal{O}^{\times}_{X}) \ar[r] & 0.} 
\]
 We shall define $SK_1(X)$ to be the kernel of the homomorphism $K_1(X)\twoheadrightarrow H^0_{\zar}(X,\mathcal{O}^{\times}_{X})$. Therefore we have an exact sequence 
\begin{align*}
     \xymatrix{
   0 \ar[r] & SK_1(X) \ar[r] & K_1(X) \ar[r] & H^0_{\zar}(X,\mathcal{O}^{\times}_{X}) \ar[r] & 0.} 
\end{align*}
For a closed subscheme $D\hookrightarrow X$, we define the relative K-theory $K(X,D)$ to fit into the homotopy fiber sequence
\begin{align*}
    K(X,D) \to K(X) \to K(D).
\end{align*}
Since the relative $K$-theory $K(X,D)$ also satisfies the Mayer-Vietoris property, the same argument goes through. Then we shall define $SK_0(X,D)$ and $SK_1(X,D)$ in the same manner. We can apply the same argument as above to relative case. Therefore, if $\dim(X)\leq 2$ we also have isomorphisms
\[
H^2_{\zar}(X,\mathcal{K}_{2,(X,D)})\cong  H^2_{\Nis}(X,\mathcal{K}_{2,(X,D)})\cong SK_0(X,D).
\]

\subsection{Recollection of homotopy $K$-theory}
 Put 
\[
 \Delta_{n}:=\Spec(\mathbb{Z}[t_{0},\cdots,t_{n}]/(\sum_{i}t_{i}-1)).
\]

 Weibel \cite{Weibel-KH} defined the homotopy $K$-theory spectrum and relative homotopy K-theory spectrum as follows:
 \[
 KH(X):=\underset{n} {\hocolim}(K(X\times \Delta_{n}))\]
 \[KH(X,D):=\underset{n} {\hocolim}(K(X\times \Delta_{n}), D\times \Delta_{n}).
 \]
Since the homotopy colimit preserves homotopy fiber sequences (see \cite[Lemma 5.19]{Thomason}), there exists a homotopy fiber sequence
\begin{align*}
    KH(X,D)\to KH(X)\to KH(D),
\end{align*}
where $D\hookrightarrow X$ is a closed subscheme of $X$. Since $KH(X)$ is homotopy equivalent to the cdh-fibrant replacement of $K(X)$(see \cite[Th\'eor\`eme 3.9]{Cisinski}), there exists a commutative diagram of strongly convergent spectral sequences 
\begin{align*}
    \xymatrix{
  E^{p,q}_{2,\zar} = H^p_{\zar}(X,\mathcal{K}_{q,X}) \ar[d] &  \Rightarrow & K_{q-p}(X) \ar[d] \\
 E^{p,q}_{2,\cdh} = H^p_{\cdh}(X, \mathcal{K}_{q,X}) & \Rightarrow & KH_{q-p}(X).}
\end{align*}
Similarly, we shall define $SKH_0(X)$ and $SKH_1(X)$ as well as the algebraic $K$-theory. 
\begin{rem}
    Actually, $KH(X)\cong KH((X)_{\red})$ holds for any noetherian scheme $X$ since this isomorphism holds for any affine scheme (see \cite[I\hspace{-1.2pt}V Corollary 12.5]{K-book}) and homotopy $K$-theory spectrum satisfies the Mayer-Vietoris property.
\end{rem}

\subsection{Review of $K_n$-regularity}
Next, we recall the definition and some properties of $K_n$-regular schemes. Throughout this section, let $X$ be a noetherian scheme.
\begin{dfn}\label{1.2}
For $n\in \mathbb{Z}$, we say that a scheme $X$ is \emph{$K_n$-regular} if the canonical map
\begin{align*}  
    K_n(X) \to K_n(X\times \mathbb{A}^m)
\end{align*}
is an isomorphism for every $m\geq 0$.
\end{dfn}
\begin{ex} \qquad

\begin{enumerate}
    \item If $X$ is a regular scheme, $X$ is $K_n$-regular for all $n\in \mathbb{Z}$ (see \cite[V Corollary 6.13.1]{K-book}).
    \item Let $X$ be a normal surface over a perfect field $k$. If $X$ has rational singularities, $X$ is a $K_0$-regular scheme (see \cite[Example 2.13, Theorem 5.9, Theorem 8.4]{Weibel}).
\end{enumerate}
   
\end{ex}

\begin{lem}\label{Vorst}
   Let $X$ be a $K_n$-regular scheme for an integer $n\in \mathbb{Z}$. Then $X$ is also a $K_{n-1}$-regular scheme. 
\end{lem}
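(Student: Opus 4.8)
The plan is to run Bass's fundamental theorem, which for the nonconnective $K$-theory of a noetherian scheme is available via Thomason--Trobaugh, and which ties the degree $n-1$ groups of $X$ to the degree $n$ groups of the $\mathbb{G}_m$-family $X\times\mathbb{G}_m$. Writing $NF(X):=\mathrm{coker}(F(X)\to F(X\times\mathbb{A}^1))$ for a functor $F$ and $N^{p}$ for its $p$-fold iterate, the splitting $K_n(X\times\mathbb{A}^m)\cong\bigoplus_{p}\binom{m}{p}\,N^{p}K_n(X)$ shows that $X$ is $K_n$-regular if and only if $N^{p}K_n(X)=0$ for all $p\geq 1$. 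So the goal becomes: deduce $N^{p}K_{n-1}(X)=0$ for all $p\geq 1$ from $N^{p}K_n(X)=0$ for all $p\geq 1$.

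First I would record the fundamental theorem in its split, natural form $K_n(X\times\mathbb{G}_m)\cong K_n(X)\oplus K_{n-1}(X)\oplus NK_n(X)^{\oplus 2}$, where the summand $K_{n-1}(X)$ is cut out by the section $x\mapsto\{t\}\cdot x$ and the boundary map. Since the splitting is natural in $X$, applying $N^{p}$ (via the cube $X\times\mathbb{A}^{p}$) exhibits $N^{p}K_{n-1}(X)$ as a natural retract of $N^{p}K_n(X\times\mathbb{G}_m)$. Applying the fundamental theorem a second time, now to the functor $N^{p}K_n$ (the homotopy of the spectrum $N^{p}K$), gives the identity $N^{p}K_n(X\times\mathbb{G}_m)\cong N^{p}K_n(X)\oplus N^{p}K_{n-1}(X)\oplus N^{p+1}K_n(X)^{\oplus 2}$. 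These two facts are the computational core.

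The main obstacle is that substituting the hypothesis $N^{p}K_n(X)=N^{p+1}K_n(X)=0$ into the last identity returns only the tautology $N^{p}K_{n-1}(X)\cong N^{p}K_{n-1}(X)$: the fundamental theorem by itself is genuinely circular and does not separate the two indices. Overcoming this circularity is precisely the content of Vorst's theorem and is the genuinely hard point. The standard way to break it uses more than the fundamental theorem: the groups $N^{p}K_{\ast}(X)$ carry compatible module structures over the big Witt vectors $W(\mathcal{O}_X)$ and are continuous, in the sense that each element is annihilated by some Verschiebung $V_m$ (Weibel, Stienstra), and it is this extra structure, not a formal manipulation of the decompositions above, that converts the uniform vanishing $N^{\bullet}K_n(X)=0$ into $N^{p}K_{n-1}(X)=0$. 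Since this input is standard and the surrounding argument is formal, in the write-up I would simply invoke Vorst's theorem (or Weibel's \emph{K-book}); the substance of this proposal is the fundamental-theorem reduction above together with the identification of the circularity as the sole obstruction.
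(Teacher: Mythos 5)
Your proposal ends by invoking Vorst's theorem as recorded in Weibel's \emph{K-book}, which is exactly what the paper does --- its entire proof is the citation of [K-book, V Theorem 8.6] --- so the two approaches coincide. Your preliminary reduction to the vanishing of $N^{p}K_{n-1}(X)$, and your diagnosis that the Bass fundamental theorem alone is circular so that the big-Witt-vector module structure on $N^{p}K_{*}$ is the essential extra input, are both accurate and go beyond what the paper records.
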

\begin{proof}
    See \cite[V Theorem 8.6]{K-book}.
\end{proof}

The next lemma describes the relation of the algebraic $K$-theory and the homotopy $K$-theory for a $K_n$-regular scheme.
\begin{lem}\label{K-reg}
Let $X$ be a $K_n$-regular scheme. Then we have an isomorphism
\[ K_n(X)\stackrel{\cong}{\longrightarrow} KH_n(X), \]
and a surjection
\[ K_{n+1}(X)  \twoheadrightarrow KH_{n+1}(X). \]
In particular, if $X$ is a regular scheme, we have an isomorphism $K_n(X)\stackrel{\cong}{\longrightarrow} KH_n(X)$ for every $n\in \mathbb{Z}$.
\end{lem}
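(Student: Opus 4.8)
The plan is to read off both statements from the spectral sequence that computes $KH$ from the nil-terms of ordinary $K$-theory, using Lemma \ref{Vorst} to kill the relevant columns. Recall that, writing $NK(X)$ for the homotopy cofiber of the split inclusion $K(X)\to K(X\times\mathbb{A}^1)$ and $N^p$ for its $p$-fold iterate, the Bass fundamental theorem gives a natural splitting $K_q(X\times\mathbb{A}^p)\cong\bigoplus_{j=0}^{p}\binom{p}{j}N^jK_q(X)$. Since $\Delta_n\cong\mathbb{A}^n$, the simplicial spectrum $[n]\mapsto K(X\times\Delta_n)$ whose realization is $KH(X)$ has normalized homotopy groups given precisely by the $N^pK_q(X)$, and the associated strongly convergent spectral sequence of a simplicial spectrum takes the form
\[
E^1_{p,q}=N^pK_q(X)\Longrightarrow KH_{p+q}(X),\qquad p\geq 0,
\]
with differentials $d^r\colon E^r_{p,q}\to E^r_{p-r,\,q+r-1}$. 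Here $E^1_{0,q}=K_q(X)$, and the $0$-skeleton inclusion induces the canonical map $K_q(X)\to KH_q(X)$, which factors as $K_q(X)=E^1_{0,q}\twoheadrightarrow E^\infty_{0,q}\hookrightarrow KH_q(X)$.

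Next I would translate $K_n$-regularity into the vanishing of nil-terms. From the splitting above, $K_n(X)\to K_n(X\times\mathbb{A}^m)$ is an isomorphism for all $m$ if and only if $N^pK_n(X)=0$ for every $p\geq 1$. Combining this with Lemma \ref{Vorst}, a $K_n$-regular scheme is $K_m$-regular for every $m\leq n$, and hence
\[
E^1_{p,q}=N^pK_q(X)=0\qquad\text{whenever } p\geq 1 \text{ and } q\leq n.
\]

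Now I would just carry out the bookkeeping. In total degree $n$, every term $E^1_{p,n-p}$ with $p\geq 1$ has $q=n-p\leq n-1$ and so vanishes; thus the skeletal filtration on $KH_n(X)$ has a single nonzero graded piece (so convergence in this degree is automatic) and the inclusion $E^\infty_{0,n}\hookrightarrow KH_n(X)$ is an isomorphism. Moreover every incoming differential to $E_{0,n}$ originates from $E^1_{r,n-r+1}=N^rK_{n-r+1}(X)=0$ for $r\geq 1$, while column $0$ emits none, so $E^\infty_{0,n}=E^1_{0,n}=K_n(X)$; therefore the canonical map $K_n(X)\to KH_n(X)$ is an isomorphism. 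In total degree $n+1$ the terms $E^1_{p,n+1-p}$ with $p\geq 2$ vanish (since $q\leq n-1$) and $E^1_{1,n}=N^1K_n(X)=0$, so again $KH_{n+1}(X)=E^\infty_{0,n+1}$; as $E^\infty_{0,n+1}$ is a quotient of $E^1_{0,n+1}=K_{n+1}(X)$ (column $0$ receives differentials but emits none), the canonical map $K_{n+1}(X)\to KH_{n+1}(X)$ is surjective. The reason we get only surjectivity here is exactly that $N^1K_{n+1}(X)$ need not vanish under mere $K_n$-regularity. Finally, a regular scheme is $K_m$-regular for all $m$ (see \cite[V Corollary 6.13.1]{K-book}), so applying the isomorphism statement at every $n$ yields $K_n(X)\cong KH_n(X)$ for all $n\in\mathbb{Z}$. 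The only genuinely delicate point is the correct set-up and convergence of the spectral sequence together with the identification of its $E^1$-page with iterated nil-groups; everything after that is index chasing.
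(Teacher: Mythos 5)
Your proof is correct and follows essentially the same route as the paper: both use the spectral sequence $E^1_{p,q}\Rightarrow KH_{p+q}(X)$ of the simplicial spectrum $[p]\mapsto K(X\times \Delta_p)$ together with Lemma \ref{Vorst} to kill all columns $p\geq 1$ in weights $q\leq n$. The only cosmetic difference is that you work with the normalized $E^1$-page $N^pK_q(X)$ while the paper uses the unnormalized terms $K_q(X\times\Delta_p)$ and passes to $E^2$; the degree bookkeeping is identical.
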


\begin{proof}
  First, there is a right half plane spectral sequence 
  \begin{align*}
      \xymatrix{
  E_{p,q}^1 = K_q(X \times \Delta_p) \Rightarrow KH_{p+q}(X)
}
  \end{align*}
 Here, the map $d_1:E^1_{p,q}\to E^1_{p-1,q}$ is given by $d_1=\sum_{0\le i \le p}(-1)^{i}\partial^*_{i}$, where $\partial^*_{i}$ is induced by a closed immersion
 \[
 \partial_{i}:
 \Spec(k[t_{0},\cdots,t_{i-1},t_{i+1}\cdots, t_{p}]/(\sum_jt_j-1)])\hookrightarrow 
 \Spec(k[t_{0},\cdots, t_{p}]/(\sum_it_i-1)]).
 \]

By Lemma \ref{Vorst}, $X$ is a $K_q$-regular scheme for $q\leq n$. Hence we have $E^2_{0,q} = K_q(X)$ for all $q\leq n$. Moreover, we have $E^2_{p,q} = 0$ for all $p>0$ and $q\leq n$.

Furthermore, $E^2_{n+1,0}$ is a quotient of $K_{n+1}(X)$. This implies our claims.
\end{proof}

Before going to another crucial property of the homotopy $K$-theory, we recall the definition of an abstract blow-up square.
\begin{dfn}
    A scheme cartesian square 
    \begin{align*}
        \xymatrix{
        \overline{Z} \ar[r]^{i'} \ar[d]_{p'} & \overline{X} \ar[d]^{p} \\
        Z \ar[r]^{i} & X
        }
    \end{align*}
    is called an \emph{abstract blow-up square} if $p:\overline{X}\to X$ is proper, $i:Z \to X$ is a closed immersion, and the induced morphism $p:\overline{X}\setminus \overline{Z}\to X\setminus Z$ is an isomorphism.
\end{dfn}

\begin{thm}\label{Cisinski}
For any abstract blow-up square, 
\begin{align*}
     \xymatrix{
  KH(X) \ar[r] \ar[d] & {KH(Z)} \ar[d]  \\
  KH(\overline{X}) \ar[r] & {KH(\overline{Z})}}
\end{align*}
is a homotopy cartesian square of spectra.
\end{thm}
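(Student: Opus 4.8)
The plan is to deduce the homotopy cartesian property directly from the fact, recorded just above the statement, that $KH$ is the cdh-fibrant replacement of $K$ (see \cite[Th\'eor\`eme 3.9]{Cisinski}). The point is that the cdh-topology is generated, in Voevodsky's sense, by a cd-structure whose distinguished squares are precisely the upper distinguished Nisnevich squares together with the abstract blow-up squares, and that this cd-structure is complete, regular, and bounded. For a topology generated by such a cd-structure, the descent criterion asserts that a presheaf of spectra is local (fibrant) for the topology if and only if it carries every distinguished square to a homotopy cartesian square. Thus the claim is exactly the abstract blow-up half of cdh-descent for $KH$.

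Concretely, I would first recall the descent criterion precisely: for a presheaf of spectra $E$, the cdh-fibrant replacement $E \to L_{\cdh} E$ is characterized by the property that $L_{\cdh} E$ sends each Nisnevich square and each abstract blow-up square to a homotopy cartesian square, the map $E \to L_{\cdh} E$ being a cdh-local equivalence. Since $KH$ is itself cdh-fibrant we have $KH \simeq L_{\cdh} KH$, so $KH$ already enjoys this square property; applying it to the given abstract blow-up square produces the homotopy cartesian square in the statement. In other words, once cdh-fibrancy of $KH$ is granted, the deduction is purely formal and the only thing to check is that an abstract blow-up square, as defined above, is one of the distinguished squares of the cdh cd-structure, which is immediate from the definition ($p$ proper, $i$ a closed immersion, and $p$ an isomorphism away from $Z$).

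The genuine content, and the step I expect to be the main obstacle, is the cdh-fibrancy of $KH$ itself. The relevant inputs are that $KH$ satisfies Nisnevich descent --- inherited from the Nisnevich descent of $K$ (Thomason--Trobaugh) through the homotopy colimit $\hocolim_n K(X \times \Delta_n)$, since $\hocolim$ preserves homotopy fiber sequences --- and that $KH$ is $\mathbb{A}^1$-homotopy invariant by construction. The difficult point is to promote Nisnevich descent to descent for abstract blow-up squares: this is exactly where the obstruction to blow-up excision, which $K$-theory itself does not satisfy, is annihilated by homotopy invariance. In characteristic zero one can run Haesemeyer's argument via resolution of singularities, whereas Cisinski's theorem supplies a characteristic-free proof of this implication. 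For the purposes of the present paper it suffices to invoke \cite[Th\'eor\`eme 3.9]{Cisinski} and read off the abstract blow-up square case as above, so I would present the argument in the formal form of the first two paragraphs and defer the hard input to that citation.
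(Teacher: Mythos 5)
Your proposal is correct and ultimately rests on the same single input as the paper, whose entire proof is the citation of \cite[Th\'eor\`eme 3.9]{Cisinski}; your additional discussion of the cdh cd-structure and the descent criterion is accurate framing but not a different argument. Since you correctly identify that the hard step (cdh-fibrancy of $KH$, equivalently blow-up descent) is exactly what the citation supplies, the two proofs coincide in substance.
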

\begin{proof}
See \cite[Theorem 3.9]{Cisinski}.
\end{proof}

Finally, we recall the definition and a property of a seminormal scheme.
\begin{dfn}
Let $A$ be a reduced ring over a field $k$, and let $B$ be the integral closure in its total quotient. We shall say that a subring $A^{sn}\subset B$ is the \emph{seminormalization} of $A$ if $A^{sn}$ is the maximal between subrings $A'\subset B$ which contain $A$ and satisfy the following conditions: 
\begin{enumerate}
    \item For any $x\in \Spec(A)$, there is only one $x'\in \Spec(B)$ lying above $x$.
    \item For any $x\in \Spec(A)$, the canonical map $k(x)\to k(x')$ is an isomorphism, where $x'\in \Spec(B)$ is lying above $x$.
\end{enumerate}
We shall say that $A$ is \emph{seminormal} if $A$ is the seminormalization of itself. For any reduced noetherian scheme $X$ over a field $k$, we define its seminormalization $X^{sn}$ in a similar way.
\end{dfn}

\begin{lem}\label{seminormal}
    Let $X$ be a reduced separated scheme of finite type over a field $k$. Then there is a commutative diagram of abelian groups 
    \begin{align*}
\xymatrix{
 H^0_{\zar}(X,\mathcal{O}^{\times}_{X}) \ar[r] \ar[d] &
  H^0_{\zar}(X^{sn}, \mathcal{O}^{\times}_{X^{sn}})\ar[d]^{\cong} \\
   H^0_{\cdh}(X, \mathcal{O}^{\times}_X) \ar[r]^-{\cong} &
   H^0_{\cdh}(X^{sn}, \mathcal{O}^{\times}_{X^{sn}}).
}
    \end{align*}
 Moreover, the bottom horizontal map and the right vertical map are isomorphisms.

\end{lem}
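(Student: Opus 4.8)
The plan is to deduce the whole lemma from a single functorial statement: for every reduced separated finite-type $k$-scheme $X$ the canonical comparison map is an isomorphism
\begin{align*}
c_X\colon\ \Gamma(X^{sn},\cO^{\times}_{X^{sn}})\ \xrightarrow{\ \cong\ }\ H^0_{\cdh}(X,\cO^{\times}_X).\tag{$\ast$}
\end{align*}
First I would note $H^0_{\zar}(X,\cO^{\times}_X)=\Gamma(X,\cO^{\times}_X)$, that the vertical maps of the diagram are the change-of-topology maps $H^0_{\zar}\to H^0_{\cdh}$, and that the horizontal maps are pullback along $X^{sn}\to X$; the square commutes by naturality of the comparison map. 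Granting $(\ast)$, both assertions are formal. Applying $(\ast)$ to $X^{sn}$, which satisfies $(X^{sn})^{sn}=X^{sn}$, gives the right vertical isomorphism; and comparing $(\ast)$ for $X$ with $(\ast)$ for $X^{sn}$ identifies the bottom map with the identity of $\Gamma(X^{sn},\cO^{\times}_{X^{sn}})$, hence it is an isomorphism.

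It remains to prove $(\ast)$, which I would do by induction on $d=\dim X$. The case $d=0$ is clear, since a reduced $0$-dimensional scheme is a finite coproduct of spectra of fields, on which the cdh- and Zariski-cohomology of $\cO^{\times}$ coincide. For the inductive step I would use cdh descent along abstract blow-up squares (the descent formalism of Theorem \ref{Cisinski}, valid for the cdh sheaf $\cO^{\times}$) together with the nil-invariance of cdh cohomology. The conductor square of the seminormalization $\pi\colon X^{sn}\to X$ is an abstract blow-up square whose closed strata $Z\hookrightarrow X$ and $\overline Z=\pi^{-1}(Z)$ have dimension $<d$. Because $\pi$ is a universal homeomorphism inducing isomorphisms on residue fields, the same holds for $\overline Z\to Z$; hence $Z$ and $\overline Z$ have, after reduction, the same seminormalization, so by the inductive hypothesis the restriction $H^0_{\cdh}(Z,\cO^{\times})\to H^0_{\cdh}(\overline Z,\cO^{\times})$ is an isomorphism. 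The Mayer--Vietoris sequence attached to the square then reduces $(\ast)$ for $X$ to $(\ast)$ for the seminormal scheme $X^{sn}$.

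Thus the heart of the matter is $(\ast)$ for seminormal, and ultimately for smooth, schemes, and this is where I expect the main obstacle. For $X$ smooth over the perfect field $k$ the sheaf $\cO^{\times}$ is a homotopy-invariant Nisnevich sheaf with transfers, so Voevodsky's comparison yields $H^0_{\cdh}(X,\cO^{\times})=H^0_{\Nis}(X,\cO^{\times})=\Gamma(X,\cO^{\times})$; this is the one place the perfectness of $k$ is genuinely used. Passing from smooth to normal and seminormal schemes requires descending along a resolution, and in positive characteristic the absence of resolution forces the use of Gabber's alterations and the full cdh (proper-cdp) descent machinery, rather than a single abstract blow-up square. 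For this reason the cleanest route is to invoke the known computation of the cdh cohomology of $\cO^{\times}$ in terms of seminormalization, which gives the seminormal case of $(\ast)$ directly; assembling this with the reduction of the previous paragraph completes the proof. The delicate point throughout is that the comparison map must be tracked compatibly through each abstract blow-up (respectively Milnor) square, so that the inductive isomorphism on the lower-dimensional strata propagates to $X$.
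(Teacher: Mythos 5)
The paper offers no argument of its own here: its ``proof'' is the single citation \cite[Lemma 2.1]{BK}. Your proposal correctly reconstructs the formal skeleton of that result. Everything does reduce to the one natural comparison $(\ast)\colon \Gamma(X^{sn},\mathcal{O}^{\times})\xrightarrow{\cong} H^0_{\cdh}(X,\mathcal{O}^{\times})$, and your derivation of the commutativity of the square, of the right vertical isomorphism via $(X^{sn})^{sn}=X^{sn}$, and of the bottom isomorphism by comparing $(\ast)$ for $X$ and for $X^{sn}$ is exactly the bookkeeping that \cite[Lemma 2.1]{BK} performs. The substantive input is the identification of $H^0_{\cdh}(-,\mathcal{O}^{\times})$ with $\Gamma((-)^{sn},\mathcal{O}^{\times})$, which in positive characteristic rests on the Huber--Kelly computation of cdh-sheafification via seminormalization (\cite{Huber-Kelly}, already in the bibliography). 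In that sense your route and the paper's are the same: both ultimately outsource $(\ast)$ to the literature.

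Two points in the portion you do try to prove should be flagged. First, the induction via the conductor square of $X^{sn}\to X$ gains nothing: it reduces $(\ast)$ for $X$ to $(\ast)$ for the seminormal scheme $X^{sn}$, but for a seminormal scheme the statement $(\ast)$ is precisely ``$\Gamma(X,\mathcal{O}^{\times})\to H^0_{\cdh}(X,\mathcal{O}^{\times})$ is an isomorphism,'' i.e.\ the full content of the ``known computation'' you then invoke in the last paragraph. Read as a proof, that paragraph is circular; read as a citation, the preceding induction is dead weight. Second, the one genuine argument you supply --- the smooth case via Voevodsky's comparison of cdh- and Nisnevich cohomology for homotopy-invariant sheaves with transfers --- requires $k$ perfect, whereas the lemma is stated over an arbitrary field; and, as you acknowledge, passing from smooth to seminormal schemes in positive characteristic cannot be accomplished by a single abstract blow-up square, so this does not assemble into a proof of $(\ast)$. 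Neither issue threatens the truth of the lemma, but your write-up should be presented for what it is: a citation of the seminormalization description of $H^0_{\cdh}(-,\mathcal{O}^{\times})$ plus formal consequences, which is also all the paper does.
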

\begin{proof}
See \cite[Lemma 2.1]{BK}.    
\end{proof}

\section{Calculation of $K$-groups and homotopy $K$-groups}\label{sec:calculation of K-theory} 
In this section, we compute various algebraic and homotopy $K$-groups. Let $X$ be a quasi-projective scheme over a field $k$. We say that a  closed subscheme $Y\hookrightarrow X$ is a conducting closed subscheme if the defining ideal sheaf of $Y$ maps to isomorphically to the ideal sheaf of $X^n$, where $X^n$ is the normalization of $X$.

\begin{lem}\label{quot-iso}
Let $k$ be an arbitrary field, and let $D$ be a reduced scheme over $k$. Then we can find a conductiong closed subscheme $E\hookrightarrow D$ such that the map
\begin{align*}
      f_{*}(\mathcal{K}_{2,D^n})/\mathcal{K}_{2,D}\to
   i_{*}(f'_{*}(\mathcal{K}_{2,E})/\mathcal{K}_{2,E'}),
\end{align*}
is the isomorphism of Nisnevich sheaves on $D$, where $D^n\to D $ is the normalization and $E'$ is defined by the following scheme cartesian square
 \begin{align*}
    \xymatrix{
      E' \ar@{^{(}->}[r]^{i'} \ar[d]_{f'} & D^{n} \ar[d]^{f}  \\
      E \ar@{^{(}->}[r]^{i} & D.}
    \end{align*}
\end{lem}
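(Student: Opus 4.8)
The plan is to check that the map is an isomorphism on Nisnevich stalks, where it collapses to a comparison attached to the conductor square. Fix $x\in D$ and pass to the henselian local ring $A=\mathcal{O}^{h}_{D,x}$; it is reduced because $D$ is. Its normalization $B$ is a finite product of henselian local normal domains and is the stalk of $f_*\mathcal{O}_{D^n}$, so that $K_2(B)$ is the stalk of $f_*\mathcal{K}_{2,D^n}$ while $K_2(A)$ is the stalk of $\mathcal{K}_{2,D}$. Taking $E$ to be the conductor subscheme, the stalk $I=\mathcal{I}_{E,x}$ of its ideal is simultaneously an ideal of $A$ and of $B$, so $A/I$ and $B/I$ are the stalks of $\mathcal{O}_{E}$ and of $f'_*\mathcal{O}_{E'}$. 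We thus land in the conductor (Milnor) square
\[
\xymatrix{
A \ar[r]\ar[d] & B \ar[d]\\
A/I \ar[r] & B/I,
}
\]
which is cartesian with $B\to B/I$ surjective; the two stalks to be compared are $\operatorname{coker}(K_2(A)\to K_2(B))$ and $\operatorname{coker}(K_2(A/I)\to K_2(B/I))$, the comparison map being induced by reduction modulo $I$.

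First I would dispose of the support. The support of the conductor is exactly the non-normal locus of $D$, which equals $|E|$; at a point $x\notin|E|$ one has $A=B$, so the source stalk $\operatorname{coker}(K_2(A)\to K_2(B))$ vanishes, in agreement with the vanishing of $i_*(-)$ off $E$. Hence the entire content is the claim that, at $x\in|E|$, reduction modulo $I$ induces an isomorphism $\operatorname{coker}(K_2(A)\to K_2(B))\xrightarrow{\ \cong\ }\operatorname{coker}(K_2(A/I)\to K_2(B/I))$. To prove this I would work with the three-term Mayer--Vietoris complex of Nisnevich sheaves associated with the conductor square,
\[
\mathcal{K}_{2,D}\longrightarrow f_*\mathcal{K}_{2,D^n}\oplus i_*\mathcal{K}_{2,E}\longrightarrow i_*f'_*\mathcal{K}_{2,E'}.
\]
Since $i_*$ is exact, once this complex is exact at the middle term and surjective on the right, a routine diagram chase identifies $\operatorname{coker}(\mathcal{K}_{2,D}\to f_*\mathcal{K}_{2,D^n})$ with $i_*\operatorname{coker}(\mathcal{K}_{2,E}\to f'_*\mathcal{K}_{2,E'})$, i.e.\ with the asserted target. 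Surjectivity on the right is the easy half: stalkwise it is the surjectivity of $K_2(B)\to K_2(B/I)$, which holds because $K_2$ of a local ring is generated by Dennis--Stein symbols and, $I$ lying in the radical of each local factor of $B$, any lift of the arguments of a symbol already satisfies the defining invertibility condition.

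The hard part, and the one I expect to be the main obstacle, is exactness at the middle term, namely that $\ker\big(K_2(B)\oplus K_2(A/I)\to K_2(B/I)\big)$ is precisely the image of $K_2(A)$. This is the Mayer--Vietoris exactness of the conductor square around $K_2$, and it is genuinely delicate: Milnor patching of vector bundles supplies this exactness through $K_0$ and $K_1$ for free, but the passage through $K_2$ is exactly where excision for algebraic $K$-theory fails in general (Swan), so it cannot be purely formal. I would establish it by a henselian-local symbol computation, lifting the Dennis--Stein relations that witness the matching of images in $K_2(B/I)$ back to $B$ and absorbing the resulting error terms into the conductor, using crucially that $I$ is an ideal of both $A$ and $B$; this amounts to controlling the relative $K_2$ of $B\to B/I$ against that of $A\to A/I$ along the common ideal $I$. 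Alternatively one can invoke the known structure of the $K$-theory of conductor squares (in the spirit of Geller--Weibel) to see that the birelative contribution does not obstruct the comparison of cokernels. This cokernel comparison, rather than the formal reductions by support and diagram chase, is where essentially all the work lies.
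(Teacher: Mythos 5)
Your stalkwise reduction to a cokernel comparison over the conductor square, and the surjectivity half via Dennis--Stein symbols, match the formal part of the paper's argument (the paper packages it as a three-term exact sequence of Nisnevich sheaves ending in $i_*(f'_*(\mathcal{K}_{2,E'})/\mathcal{K}_{2,E})$). But there is a genuine gap at exactly the point you flag as ``where essentially all the work lies.'' You fix $E$ to be the conductor subscheme itself and then assert that the birelative contribution ``does not obstruct the comparison of cokernels.'' It does: the paper identifies the kernel of the (always surjective) map $f_*(\mathcal{K}_{2,D^n})/\mathcal{K}_{2,D}\twoheadrightarrow i_*(f'_*(\mathcal{K}_{2,E'})/\mathcal{K}_{2,E})$ as the image of the birelative sheaf $\mathcal{K}_{1,(D,D^n,E)}$, and the Geller--Weibel theorem you cite computes this sheaf as $\mathcal{I}_E/\mathcal{I}_E^2\otimes_{E'}\Omega^1_{E'/E}$, which is nonzero for the conductor ideal already for a cusp. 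Your proposal offers no mechanism for showing that the image of this nonzero group in the middle term vanishes, and no ``henselian-local symbol computation'' will make it vanish in general --- this is precisely the failure of $K_2$-excision that you yourself acknowledge.

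The missing idea --- and the reason the lemma is phrased as ``we can find a conducting closed subscheme'' rather than asserting the statement for the conductor --- is to exploit the freedom in the choice of $E$. The paper runs the argument twice, for the conductor $E$ and for its thickening $2E$ cut out by $\mathcal{I}_E^2$ (again a conducting subscheme), and compares the two three-term sequences: the middle terms $f_*(\mathcal{K}_{2,D^n})/\mathcal{K}_{2,D}$ agree, while the transition map on obstruction terms
\[
\mathcal{I}_{2E}/\mathcal{I}_{2E}^2\otimes_{2E'}\Omega^1_{2E'/2E}\longrightarrow \mathcal{I}_{E}/\mathcal{I}_{E}^2\otimes_{E'}\Omega^1_{E'/E}
\]
is zero, since $\mathcal{I}_{2E}=\mathcal{I}_E^2$ lands inside $\mathcal{I}_E^2$. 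Commutativity of the resulting square then forces the obstruction term for $2E$ to map to zero in the middle term, so the map is an isomorphism for the choice $2E$ (not, in general, for $E$). Without this replacement your argument does not close.
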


\begin{proof}
Take a conducting closed subscheme $E$. Then, there is a commutative diagram of exact sequence of sheaves on $D_{\Nis}$
\begin{align*}
   \xymatrix{
  \mathcal{K}_{2,(D,E)}\ar[d] \ar[r] & \mathcal{K}_{2,D} \ar[r] \ar[d] & 
  i_*(\mathcal{K}_{2,E}) \ar[r] \ar[d] & 0 \\
  f_{*}(\mathcal{K}_{2,(D^n,E')}) \ar[r] & f_{*}\mathcal{K}_{2,D^n} \ar[r] & 
  f_*i'_*(\mathcal{K}_{2,(E')}) \ar[r] & 0.}
\end{align*}
Here $\mathcal{K}_{i,D}$ is the sheaf associated to the presheaf  $U\to K_i(U)$ on $D_{\Nis}$, and others as well. Hence we get the following exact sequence of sheaves 
\begin{align*}
    \xymatrix{
  f_{*}(\mathcal{K}_{2,(D^n,E')})/ \mathcal{K}_{2,(D,E)}
  \ar[r] &
  f_{*}(\mathcal{K}_{2,D^n})/\mathcal{K}_{2,D} \ar[r] &
  i_{*}(f'_{*}(\mathcal{K}_{2,E'})/\mathcal{K}_{2,E}) \ar[r] & 0.
  }
\end{align*}
On the other hand, there is a homotopy fiber sequence of  double relative K-theory
\begin{align*}
   \xymatrix{
  \mathcal{K}_{2,(D,E)} \ar[r] & f_{*}(\mathcal{K}_{2,(D^n,E')}) \ar[r] & 
  \mathcal{K}_{1,(D,D^n,E)} \ar[r] & 
  \mathcal{K}_{1,(D,E)} \ar[r] & f_{*}(\mathcal{K}_{1,(D^n,E')}),
}
\end{align*}
where $K(D,D^n,E)$ is the homotopy fiber of the map $K(D,E)\to K(D^n,E')$. However, $\mathcal{K}_{1,(D,E)}$ is isomorphic to the sheaf $U\to Ker(\mathcal{O}^{\times}_U \to \mathcal{O}^{\times}_{U\times_{D}E})$ on $D_{\Nis}$, and $\mathcal{K}_{1,(D^n,E')}$ is similar. Hence $\mathcal{K}_{1,(D,E)} \to  f_{*}(\mathcal{K}_{1,(D^n,E')})$ is injective, or equivalently $\mathcal{K}_{2,(D^n,E')} \to \mathcal{K}_{1;(D,D^n,E)}$ is surjective. Therefore, as sheaves on $D_{\Nis}$ we have isomorphisms
\begin{align*}
    f_{*}(\mathcal{K}_{2,(D^n,E')})/ \mathcal{K}_{2,(D,E)} \cong
    \mathcal{K}_{1,(D,D^n,E)} \cong
    \mathcal{I}_E/\mathcal{I}^2_E \otimes_{E'} \Omega^1_{E'/E},
\end{align*}
where $\mathcal{I}_E$ is the ideal sheaf on $D$ defining $E$. The bijectivity of the right map was proved by Geller and Weibel \cite[Theorem 0.2]{GW}.

Since the same argument goes through for the closed subscheme $2E\hookrightarrow D$ which is determined by the ideal sheaf $\mathcal{I}_E^2$, we get a commutative diagram of sheaves 
\begin{align*}
    \xymatrix{
  \mathcal{I}_{2E}/\mathcal{I}^2_{2E} \otimes_{2E'} \Omega^1_{2E'/2E} \ar[r] \ar[d]^0^0 &
  f_{*}(\mathcal{K}_{2,D^n})/\mathcal{K}_{2,D} \ar[r] \ar@{=}[d] &
  i_{*}(f'_{*}(\mathcal{K}_{2,2E'})/\mathcal{K}_{2,2E}) \ar[r] \ar[d] & 0.\\
  \mathcal{I}_{E}/\mathcal{I}^2_{E} \otimes_{E'} \Omega^1_{E'/E} \ar[r] &
  f_{*}(\mathcal{K}_{2,D^n})/\mathcal{K}_{2,D} \ar[r] &
  i_{*}(f'_{*}(\mathcal{K}_{2,E'})/\mathcal{K}_{2,E}) \ar[r] & 0 }
\end{align*}
It is clear that the left vertical map is zero. This implies that  the surjective map $f_{*}(\mathcal{K}_{2,D^n})/\mathcal{K}_{2,D} \twoheadrightarrow  i_{*}(f'_{*}(\mathcal{K}_{2,2E'})/\mathcal{K}_{2,2E})$ is an isomorphism. Therefore $2E$ has our desired property.
\end{proof}

\begin{prop}\label{nor-seq}
Let $k$ be a perfect field of positive characteristic, and let $D$ be a reduced curve over $k$. Then we can find a conducting closed subscheme $E \hookrightarrow D$ such that there exists an exact sequence of abelian groups
\begin{align*}
    K_2(D^n)\oplus K_2(E)\to
    K_2(E'_{\red})\to SK_1(D)\to SK_1(D^n)\to 0,
\end{align*}
where $f:D^n\to D$ is the normalization of $D$ and $E':= E\times_{D} D^n$.
\end{prop}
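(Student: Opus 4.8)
The plan is to deduce everything from the single sheaf identification supplied by Lemma \ref{quot-iso}, by passing to Nisnevich cohomology and exploiting that $D$ is a curve. Write $f\colon D^n\to D$ for the normalization, and let $E\hookrightarrow D$ be the conducting closed subscheme produced by Lemma \ref{quot-iso}, so that the cokernel sheaf $\mathcal{Q}:=f_*\mathcal{K}_{2,D^n}/\mathcal{K}_{2,D}$ is identified with $i_*\bigl(f'_*\mathcal{K}_{2,E'}/\mathcal{K}_{2,E}\bigr)$, a sheaf supported on the zero-dimensional scheme $E$. First I would record the input identifications coming from the Thomason--Trobaugh spectral sequence: on a curve only the columns $p=0,1$ survive, so $SK_1(D)\cong H^1_{\Nis}(D,\mathcal{K}_{2,D})$ and $SK_1(D^n)\cong H^1_{\Nis}(D^n,\mathcal{K}_{2,D^n})\cong H^1_{\Nis}(D,f_*\mathcal{K}_{2,D^n})$, the last isomorphism because $f$ is finite so that $R^qf_*=0$ for $q>0$; and the edge map $K_2(D^n)\twoheadrightarrow H^0_{\Nis}(D^n,\mathcal{K}_{2,D^n})$ is surjective.

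Next I would feed the four-term exact sequence of Nisnevich sheaves
\[
0\to \mathcal{N}\to \mathcal{K}_{2,D}\to f_*\mathcal{K}_{2,D^n}\to \mathcal{Q}\to 0
\]
into cohomology, where $\mathcal{N}:=\ker(\mathcal{K}_{2,D}\to f_*\mathcal{K}_{2,D^n})$. Away from the conductor the normalization is an isomorphism, so both $\mathcal{N}$ and $\mathcal{Q}$ are supported on the zero-dimensional $E$ and hence have no higher Nisnevich cohomology, while $H^2_{\Nis}$ of any sheaf vanishes on the curve $D$. Splitting the four-term sequence through the image sheaf $\mathcal{G}:=\im(\mathcal{K}_{2,D}\to f_*\mathcal{K}_{2,D^n})$ and using these vanishings yields $H^1(\mathcal{K}_{2,D})\cong H^1(\mathcal{G})$ together with the exact tail
\[
H^0(f_*\mathcal{K}_{2,D^n})\to H^0(\mathcal{Q})\to SK_1(D)\to SK_1(D^n)\to 0 .
\]
This already gives surjectivity of $SK_1(D)\to SK_1(D^n)$ and identifies its kernel with $\operatorname{coker}\bigl(H^0(f_*\mathcal{K}_{2,D^n})\to H^0(\mathcal{Q})\bigr)$.

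It then remains to compute the two $H^0$ terms. Since $E$ and $E'$ are zero-dimensional, $H^0_{\Nis}(E,\mathcal{K}_{2,E})=K_2(E)$ and $H^0_{\Nis}(E',\mathcal{K}_{2,E'})=K_2(E')$ with no higher cohomology, so the identification of $\mathcal{Q}$ gives $H^0(\mathcal{Q})=\operatorname{coker}\bigl(K_2(E)\to K_2(E')\bigr)$, the map being pullback along $f'\colon E'\to E$. Because the edge map $K_2(D^n)\twoheadrightarrow H^0(f_*\mathcal{K}_{2,D^n})$ is surjective, the image of $H^0(f_*\mathcal{K}_{2,D^n})$ in $H^0(\mathcal{Q})$ equals the image of $K_2(D^n)$ under restriction along $E'\hookrightarrow D^n$. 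Hence
\[
\ker\bigl(SK_1(D)\to SK_1(D^n)\bigr)\cong \operatorname{coker}\bigl(K_2(D^n)\oplus K_2(E)\to K_2(E')\bigr).
\]
The final, characteristic-sensitive, step is to replace $K_2(E')$ by $K_2(E'_{\red})$: as $k$ is perfect of positive characteristic and $E'$ is zero-dimensional, the Dennis--Stein isomorphism $K_2(E')\cong K_2(E'_{\red})$ applies, turning this cokernel into $\operatorname{coker}\bigl(K_2(D^n)\oplus K_2(E)\to K_2(E'_{\red})\bigr)$ and producing exactly the asserted exact sequence.

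The main obstacle I anticipate is not the homological bookkeeping but this last substitution: it is precisely where positive characteristic is indispensable, and I must check that the Dennis--Stein isomorphism is compatible with both structural maps, restriction from $D^n$ and pullback from $E$, so that $K_2(D^n)$ and $K_2(E)$ land correctly in $K_2(E'_{\red})$. A secondary point demanding care is the verification that $\mathcal{N}$ and $\mathcal{Q}$ are supported in dimension zero, so that their higher cohomology vanishes; this is what allows me to replace $H^1(\mathcal{G})$ by $SK_1(D)=H^1_{\Nis}(D,\mathcal{K}_{2,D})$ without having to prove injectivity of $\mathcal{K}_{2,D}\to f_*\mathcal{K}_{2,D^n}$.
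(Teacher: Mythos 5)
Your proposal is correct and follows essentially the same route as the paper: both arguments rest on Lemma \ref{quot-iso}, the vanishing of higher Nisnevich cohomology for the kernel and cokernel sheaves of $\mathcal{K}_{2,D}\to f_*\mathcal{K}_{2,D^n}$ (which are supported on the zero-dimensional conductor), the Thomason--Trobaugh edge-map identifications of $SK_1$ and of $H^0$ with $K_2$, and the Dennis--Stein isomorphism $K_2(E')\cong K_2(E'_{\red})$ at the end. The only difference is organizational: you split the four-term sheaf sequence through its image sheaf, whereas the paper runs a diagram chase on two rows of exact sequences; the mathematical content is identical.
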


\begin{proof}
  We can choose a conducting closed subscheme $E \hookrightarrow D$ such as Lemma \ref{quot-iso}. Then, there exists a commutative diagram of exact sequence of sheaves on $D_{\Nis}$
  \begin{align*}
      \xymatrix{
  \mathcal{K}_{2,D}\ar[d] \ar[r] &  f_{*}(\mathcal{K}_{2,D^n})\ar[r] \ar[d] & 
  f_{*}(\mathcal{K}_{2,D^n})/\mathcal{K}_{2,D} \ar[r] \ar[d]^{a} & 0 \\
  i_{*}\mathcal{K}_{2,E} \ar[r] & i_{*}(f'_{*}(\mathcal{K}_{2,E'})) \ar[r] & 
  i_{*}(f'_{*}(\mathcal{K}_{2,E'})/\mathcal{K}_{2,E}) \ar[r] & 0.}
  \end{align*}
  The right vertical map $a$ is an isomorphism by Lemma \ref{quot-iso}. Note that $\mathcal{K}_{2,D}\to f_{*}(\mathcal{K}_{2,D^n})$ is a generically isomorphism. Taking the cohomology of each sheaf on $D_{\Nis}$, we get a commutative diagram
 \begin{align*}
      \xymatrix{
  \, & H^0_{\Nis}(D,\mathcal{K}_{2,D^n})\ar[d] \ar[r] & H^0_{\Nis}(D,f_{*}(\mathcal{K}_{2,D^n})/\mathcal{K}_{2,D}) \ar[r] \ar[d]^{a}  & \, \\
  H^0_{\Nis}(E,\mathcal{K}_{2,E}) \ar[r] & 
  H^0_{\Nis}(E',\mathcal{K}_{2,E'}) \ar[r] & 
 H^0_{\Nis}(E,f'_{*}(\mathcal{K}_{2,D^n})/\mathcal{K}_{2,D})\ar[r] & \,
  } 
  \end{align*}
  \begin{align*}
\xymatrix{
     \ar[r] & H^1_{\Nis}(D,\mathcal{K}_{2,D}) \ar[r] \ar[d] & 
    H^1_{\Nis}(D^n,\mathcal{K}_{2,D^n}) \ar[r] &  0\\
     \ar[r] & 0}
  \end{align*}
Then the map $a$ is bijective as above.
The diagram chasing implies the following exact sequence
\begin{align*}
    H^0_{\Nis}(D,\mathcal{K}_{2,D^n})\oplus K_2(E)\to
    K_2(E')\to SK_1(D)\to SK_1(D^n)\to 0.
\end{align*}

This is because
\[ K_2(E)\to  H^0_{\Nis}(E,\mathcal{K}_{2,E}), \qquad
   K_2(E')\to  H^0_{\Nis}(E',\mathcal{K}_{2,E'}) \]
\[ H^1_{\Nis}(D,\mathcal{K}_{2,D}) \to SK_1(D), \qquad
   H^1_{\Nis}(D^n,\mathcal{K}_{2,D^n}) \to SK_1(D^n) \]
are all isomorphisms by the Thomason-Trobaugh spectral sequence.

Now, note that $\dim(E)=\dim(E')=0$ and $E'\cong \bigsqcup_i \Spec(A_i/I_i)$, where $A_i$ is a DVR and $I_i$ is an ideal of  $A_i$. Hence, by \cite[Lemma 3.4]{Denis-Stein},  $K_2(E')\cong K_2(E'_{\red})$. In addition, $K_2(D^n)\to H^0_{\Nis}(D,\mathcal{K}_{2,D^n})$ is surjective by the Thomason-Trobaugh spectral sequence. This completes the proof.
\end{proof}

From the previous lemma and proposition, we can clarify the connection between the algebraic $K$-theory and the homotopy $K$-theory.
\begin{cor}\label{SK-SKH iso}
Let $k$ be a perfect field of positive characteristic, and let $D$ be a reduced curve over $k$.
  Then the canonical map
\[
    SK_1(D) \to SKH_1(D)
\]
  is an isomorphism.
\end{cor}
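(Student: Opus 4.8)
The plan is to deduce Corollary \ref{SK-SKH iso} from Proposition \ref{nor-seq} by comparing the exact sequence computing $SK_1(D)$ with the analogous one for $SKH_1(D)$. First I would recall that by Lemma \ref{K-reg} together with the general relationship between $K$-theory and homotopy $K$-theory, the homotopy $K$-groups can be computed by a cdh-version of the same spectral sequences appearing in \S~\ref{sec:review of K-theory}; in particular there is a canonical comparison map of exact sequences from the algebraic to the homotopy side. The essential point will be to show that the map $SK_1(D)\to SKH_1(D)$ is both injective and surjective by a diagram chase relating the sequence
\begin{align*}
    K_2(D^n)\oplus K_2(E)\to K_2(E'_{\red})\to SK_1(D)\to SK_1(D^n)\to 0
\end{align*}
of Proposition \ref{nor-seq} to its homotopy-invariant counterpart.

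Next I would establish the two flanking comparisons. On the one hand, $D^n$ is the normalization of a reduced curve over a perfect field, hence regular (normal of dimension one), so by Lemma \ref{K-reg} we have $K_n(D^n)\cong KH_n(D^n)$ for all $n$, and in particular $SK_1(D^n)\cong SKH_1(D^n)$ and $K_2(D^n)\cong KH_2(D^n)$. On the other hand, for the zero-dimensional schemes $E$ and $E'$ the Remark after the definition of $KH$ gives $KH(E)\cong KH(E_{\red})$; combined with the identification $K_2(E')\cong K_2(E'_{\red})$ coming from \cite[Lemma 3.4]{Denis-Stein} in the proof of Proposition \ref{nor-seq}, the algebraic $K_2$ of the (possibly nonreduced) conducting data already agrees with the homotopy-invariant $KH_2$ of the reduced schemes. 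This is exactly where positive characteristic and perfectness are indispensable: it is the Dennis–Stein collapse $K_2(E')\cong K_2(E'_{\red})$ that forces the algebraic term to coincide with the nilpotent-invariant one, and the analogous statement fails in characteristic zero.

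Then I would run the homotopy-$K$ analogue of Proposition \ref{nor-seq}. Since $KH$ satisfies the abstract blow-up descent of Theorem \ref{Cisinski} and is nil-invariant, the normalization square for the reduced curve $D$ yields an exact sequence
\begin{align*}
    KH_2(D^n)\oplus KH_2(E_{\red})\to KH_2(E'_{\red})\to SKH_1(D)\to SKH_1(D^n)\to 0,
\end{align*}
and the canonical map from $K$ to $KH$ gives a morphism from the algebraic sequence to this one. By the previous paragraph the leftmost and rightmost comparison maps ($K_2(D^n)\to KH_2(D^n)$, $K_2(E'_{\red})\to KH_2(E'_{\red})$, and $SK_1(D^n)\to SKH_1(D^n)$) are all isomorphisms, so the five lemma forces $SK_1(D)\to SKH_1(D)$ to be an isomorphism as well.

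The main obstacle I anticipate is not the final diagram chase but securing the homotopy-$K$ exact sequence with precisely the right terms: one must verify that the cdh-descent furnished by Theorem \ref{Cisinski}, applied to the normalization square, produces a Mayer–Vietoris-type sequence whose low-degree terms collapse to the displayed four-term sequence, and that all the identifications of $SKH_1$ and $KH_2$ terms are genuinely compatible with the structural maps $K\to KH$. In particular I would need to check that $K_2(E)\to KH_2(E_{\red})$ and $K_2(E'_{\red})\to KH_2(E'_{\red})$ are the \emph{same} maps that make the two four-term sequences commute, rather than merely abstractly isomorphic, so that the five lemma genuinely applies.
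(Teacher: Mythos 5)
Your proposal follows essentially the same route as the paper: compare the four-term sequence of Proposition \ref{nor-seq} with its $KH$-analogue obtained from Theorem \ref{Cisinski} and nil-invariance of $KH$, identify the flanking comparison maps using regularity of $D^n$ and of $E'_{\red}$ together with the Dennis--Stein collapse, and conclude by the four/five lemma. The only (harmless) differences are that the paper settles for \emph{surjectivity} of $K_2(D^n)\oplus K_2(E)\to KH_2(D^n)\oplus KH_2(E_{\red})$ rather than bijectivity, and deduces $SK_1(D^n)\cong SKH_1(D^n)$ via the unit-group comparison of Lemma \ref{seminormal} rather than directly from $K_1(D^n)\cong KH_1(D^n)$.
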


\begin{proof}
 Since $KH$-theory satisfies $\cdh$-descent, we have an exact sequence of abelian groups 
 \begin{align*}
     \xymatrix{KH_2(D^n) \oplus KH_2(E_{\red}) \ar[r] & {KH_2(E'_{\red})} \ar[r] &  SKH_1(D) \ar[r] &
        SKH_1(D^n) \ar[r] & 0}
 \end{align*}
 induced by an abstract blow-up square. This follows from isomorphisms $KH_2(E')\cong KH_2(E'_{\red})$ and $KH_2(E)\cong KH_2(E_{\red})$. Combining this with the exact sequence of Proposition \ref{nor-seq}, we get the commutative diagram of exact sequences of abelian groups
 \begin{equation}\label{K-KH seq1}
  \xymatrix@C.8pc{
    K_2(D^n) \oplus K_2(E) \ar[r] \ar[d]^{\mbox{\fontsize{12pt}{22pt}\selectfont $a$}} & {K_2(E'_{\red})} \ar[r] \ar[d]^{\mbox{\fontsize{12pt}{22pt}\selectfont $b$}} &
    SK_1(D) \ar[r] \ar[d]^{\mbox{\fontsize{12pt}{22pt}\selectfont $c$}} &
    SK_1(D^n) \ar[r] \ar[d]^{\mbox{\fontsize{12pt}{22pt}\selectfont $d$}} &  0 \\
    KH_2(D^n) \oplus KH_2(E_{\red}) \ar[r] & {KH_2(E'_{\red})} \ar[r] &  SKH_1(D) \ar[r] &
    SKH_1(D^n) \ar[r] & 0.}
 \end{equation}

 The last right vertical map $d$ in \eqref{K-KH seq1} is an isomorphism from the following commutative diagram.
 \begin{align}\label{D^n:SK-SKH seq}
    \xymatrix{
    0 \ar[r] & SK_1(D^n) \ar[d]^{\mbox{\fontsize{12pt}{22pt}\selectfont $d$}} \ar[r] & K_1(D^n) \ar[r] \ar[d]^{\mbox{\fontsize{12pt}{22pt}\selectfont $e$}} & H^0_{\zar}(D^n, \mathcal{O}^{\times}_{D^n})\ar[r] \ar[d]^{\mbox{\fontsize{11pt}{22pt}\selectfont $f$}} &
    0 \\
    0 \ar[r] & SKH_1(D^n) \ar[r] & KH_1(D^n) \ar[r] &
    H^0_{\cdh}(D^n, \mathcal{O}^{\times}_{D^n}) \ar[r] & 
    0}
 \end{align}
 By Lemma \ref{Vorst} and Lemma \ref{seminormal}, we see that the middle map $e$ and right vertical map $f$ in \eqref{D^n:SK-SKH seq} are isomorphisms because $D^n$ is regular. Hence the left vertical map $d$ in \eqref{D^n:SK-SKH seq} and the last vertical map $d$ in \eqref{K-KH seq1} are also isomorphisms.
 
 The second vertical map $b$ in \eqref{K-KH seq1} is an isomorphism because $E'_{\red}$ is a finite product of spectra of fields, in particular a regular scheme. The first vertical map $a$ in \eqref{K-KH seq1} is surjective because $D^n$ is regular and the map $K_2(E)\to KH_2(E)$ factors through $K_2(E) \twoheadrightarrow K_2(E_{\red}) \stackrel{\cong}{\longrightarrow} KH_2(E)$. Hence a simple diagram chasing shows that the third vertical map $c$ in \eqref{K-KH seq1} is an isomorphism. This completes the proof.
\end{proof}

Finally, we conclude this section with the following proposition.
\begin{prop}\label{main prop}
Let $k$ be a perfect field of positive characteristic. Let $X$ be a projective normal surface over $k$, and let $D\hookrightarrow X$ be a reduced effective Cartier divisor on $X$. Assume that $X$ is $K_0$-regular. Then the canonical map 
\[
  K_0(X,D) \to KH_0(X,D)
\]
is injective.
\end{prop}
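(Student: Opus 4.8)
The plan is to compare the relative $K$-theory fiber sequence with the relative homotopy $K$-theory fiber sequence and to reduce the injectivity of $K_0(X,D)\to KH_0(X,D)$ to the injectivity on the curve $D$, which has already been established in Corollary \ref{SK-SKH iso}. First I would write down the two homotopy fiber sequences $K(X,D)\to K(X)\to K(D)$ and $KH(X,D)\to KH(X)\to KH(D)$ and extract the associated long exact sequences of homotopy groups. These fit into a commutative ladder
\begin{align*}
\xymatrix{
K_1(X) \ar[r] \ar[d] & K_1(D) \ar[r] \ar[d] & K_0(X,D) \ar[r] \ar[d] & K_0(X) \ar[r] \ar[d] & K_0(D) \ar[d] \\
KH_1(X) \ar[r] & KH_1(D) \ar[r] & KH_0(X,D) \ar[r] & KH_0(X) \ar[r] & KH_0(D).
}
\end{align*}
The goal is then a diagram chase showing that the middle vertical map is injective, using control on the neighboring verticals.

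The key inputs are the following. Since $X$ is $K_0$-regular of dimension $2$, Lemma \ref{K-reg} gives that $K_0(X)\to KH_0(X)$ is an isomorphism and $K_1(X)\to KH_1(X)$ is surjective. On the divisor $D$, which is a reduced curve over a perfect field of positive characteristic, I would decompose $K_1(D)\to KH_1(D)$ via the exact sequence $0\to SK_1(D)\to K_1(D)\to H^0_{\zar}(D,\mathcal{O}^\times_D)\to 0$ and its homotopy analogue. Corollary \ref{SK-SKH iso} gives that $SK_1(D)\to SKH_1(D)$ is an isomorphism, while Lemma \ref{seminormal} controls the unit-group piece at the cdh level. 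For $K_0(D)\to KH_0(D)$, I would again use that $KH$ is insensitive to nilpotents together with the structure of the reduced curve to obtain at least injectivity after identifying the relevant pieces.

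With these in hand the diagram chase proceeds as follows. Suppose $\xi\in K_0(X,D)$ maps to $0$ in $KH_0(X,D)$. Its image in $K_0(X)$ maps to $0$ in $KH_0(X)$, and since $K_0(X)\to KH_0(X)$ is an isomorphism, $\xi$ maps to $0$ in $K_0(X)$; hence $\xi$ comes from some $\eta\in K_1(D)$. The element $\eta$ maps into $KH_1(D)$ to something coming from $KH_1(X)$ (by exactness, since the image of $\xi$ in $KH_0(X,D)$ vanishes). Here I would use the surjectivity $K_1(X)\twoheadrightarrow KH_1(X)$ together with the $SK_1$/units decomposition of $K_1(D)\to KH_1(D)$ to correct $\eta$ by an element coming from $K_1(X)$ so that, after the correction, $\eta$ maps to $0$ in $KH_1(D)$ without changing the image $\xi$ in $K_0(X,D)$. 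The injectivity of $K_1(D)\to KH_1(D)$ on the relevant piece then forces $\eta=0$, whence $\xi=0$. The main obstacle is precisely the control of the map $K_1(D)\to KH_1(D)$: it is not injective on the nose (the unit-group part $H^0_{\zar}(D,\mathcal{O}^\times_D)\to H^0_{\cdh}(D,\mathcal{O}^\times_D)$ need not be injective because $D$ may fail to be seminormal), so the chase must be arranged so that only the $SK_1$-part of $\eta$ can survive, and it is there that Corollary \ref{SK-SKH iso} and the functorial splitting of the edge map are indispensable.
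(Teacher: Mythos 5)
Your proposal follows the same route as the paper: the ladder of long exact sequences from the two fiber sequences, Lemma \ref{K-reg} for the $X$-columns (surjectivity of $K_1(X)\to KH_1(X)$ and bijectivity of $K_0(X)\to KH_0(X)$), the $SK_1$/units decomposition of $K_1(D)\to KH_1(D)$ with Corollary \ref{SK-SKH iso} and Lemma \ref{seminormal} as inputs, and a four-lemma-style chase. The chase you describe (kill the image in $K_0(X)$, lift to $\eta\in K_1(D)$, correct $\eta$ by an element coming from $K_1(X)$ using $K_1(X)\twoheadrightarrow KH_1(X)$, then conclude) is exactly the paper's argument.

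The one point where you go astray is the claimed obstacle at the end: you assert that $H^0_{\zar}(D,\mathcal{O}^\times_D)\to H^0_{\cdh}(D,\mathcal{O}^\times_D)$ need not be injective when $D$ fails to be seminormal, and you then leave the final step as a vague instruction to ``arrange the chase so that only the $SK_1$-part of $\eta$ can survive.'' This obstacle is not real. Since $D$ is reduced, the seminormalization $D^{sn}\to D$ satisfies $\mathcal{O}_D\hookrightarrow \pi_*\mathcal{O}_{D^{sn}}$ (it is a subring of the integral closure in the total quotient ring), so a global unit on $D$ that becomes $1$ on $D^{sn}$ is already $1$; failure of seminormality obstructs \emph{surjectivity} of the unit map, never injectivity. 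Via the identification $H^0_{\cdh}(D,\mathcal{O}^\times_D)\cong H^0_{\zar}(D^{sn},\mathcal{O}^\times_{D^{sn}})$ of Lemma \ref{seminormal}, the map $f$ is therefore injective, and combined with the isomorphism $SK_1(D)\xrightarrow{\;\cong\;}SKH_1(D)$ of Corollary \ref{SK-SKH iso} this makes $K_1(D)\to KH_1(D)$ injective on the nose. The chase then closes with no further arrangement. Had the unit part genuinely failed to be injective, your proposed workaround would not obviously suffice: after correcting $\eta$ by an element from $K_1(X)$, the residue could still carry a nontrivial unit component killed by $f$, and nothing in your sketch rules that out. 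So you should replace the last paragraph's hedge by the direct observation that $f$ is injective for reduced $D$.
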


\begin{proof}
First, we have a commutative diagram of spectra
\begin{align*}
  \xymatrix{
   K(X.D)  \ar[r] \ar[d] & K(X) \ar[r] \ar[d] &  
  K(D) \ar[d] \\
  KH(X,D)  \ar[r] & KH(X) \ar[r] &  KH(D). }
\end{align*}
Taking their homotopy groups, we get a commutative diagram of exact sequences of abelian groups
\begin{align}\label{K-KH seq2}
    \xymatrix{
  K_1(X)  \ar[r] \ar[d]^{\mbox{\fontsize{12pt}{22pt}\selectfont $a$}} &
  {K_1(D)} \ar[r] \ar[d]^{\mbox{\fontsize{12pt}{22pt}\selectfont $b$}} &  
  K_0(X,D) \ar[r] \ar[d]^{\mbox{\fontsize{12pt}{22pt}\selectfont $c$}} &
  K_0(X) \ar[d]^{\mbox{\fontsize{12pt}{22pt}\selectfont $d$}}\\
  KH_1(X)  \ar[r] & {KH_1(D)} \ar[r] &  KH_0(X,D) \ar[r] &
  KH_0(X). }
\end{align}
The surjectivity of the first vertical map $a$ and the bijectivity of the last right vertical map $d$ in \eqref{K-KH seq2} follow from Lemma \ref{K-reg}. On the other hand, we have
\begin{equation}\label{D:SK-SKH seq}
\xymatrix{
  0 \ar[r] & 
  SK_1(D) \ar[d]^{\mbox{\fontsize{12pt}{22pt}\selectfont $e$}} \ar[r] & 
  K_1(D) \ar[r] \ar[d]^{\mbox{\fontsize{12pt}{22pt}\selectfont $b$}} &
  H^0_{\zar}(D, \mathcal{O}^{\times}_D)
      \ar[r] \ar[d]^{\mbox{\fontsize{11pt}{22pt}\selectfont $f$}} & 0 \\
    0 \ar[r] & SKH_1(D) \ar[r] & KH_1(D) \ar[r] & H^0_{\cdh}(D, \mathcal{O}^{\times}_D)
      \ar[r] & 0.}
\end{equation}
The left vertical map $e$ in \eqref{D:SK-SKH seq} is an isomorphism by Proposition \ref{SK-SKH iso}. The right vertical map $f$ in \eqref{D:SK-SKH seq} is injective from the isomorphism $H^0_{\cdh}(D, \mathcal{O}^{\times}_D)\cong H^0_{\zar}(D^{sn}, \mathcal{O}^{\times}_{D^{sn}})$ in Lemma \ref{seminormal}, where $D^{sn}$ is the seminormalization of $D$. From these, we see that the middle vertical map $b$ in \eqref{D:SK-SKH seq} and the second vertical map $b$ in \eqref{K-KH seq2} are injective. Therefore the diagram chasing shows the injectivity of the map $c$ in \eqref{K-KH seq2}.
\end{proof}

\section{Proof of the main theorem (1)}\label{sec:proof of 1}
In this section, we give a proof of Theorem \ref{main theorem} (1).

\begin{proof}[Proof of  Theorem \ref{main theorem} (1)]
By the resolution of singularities of surfaces, we can find a projective birational map $\pi:\overline{X}\to X$ such that $\overline{X}$ is regular and $(\pi^*D)_{\red} \hookrightarrow \overline{X}$ is simple normal crossing divisor, where $\pi^*D$ is a pullback of Cartier divisor $D$.

By Theorem \ref{Cisinski}, we have a homotopy cartesian square of spectra
\begin{align*}
    \xymatrix{
  KH(X) \ar[r] \ar[d] & {KH(D)} \ar[d]  \\
  KH(\overline{X}) \ar[r] & {KH(\pi^*D)}.}
\end{align*}
Hence the homotopy fibers of the top and bottom horizontal maps are homotopy equivalent; that is, 
\[ KH(X,D)\stackrel{\cong}{\longrightarrow} KH(\overline{X},\pi^*D). 
\]
We also have a homotopy equivalence $KH(\overline{X},\pi^*D){\cong} KH(\overline{X},(\pi^*D)_{\red})$.
Taking their homotopy groups, we get a commutative diagram
\begin{align*}
    \xymatrix{
  SK_0(X,D) \ar@{^{(}->}[r] \ar[d] &
  K_0(X,D) \ar[r]^{\mbox{\fontsize{12pt}{22pt}\selectfont $a$}} \ar[d] &
  KH_0(X,D)  \ar[d]^{\mbox{\fontsize{12pt}{22pt}\selectfont $c$}}\\
  SK_0(\overline{X},(\pi^*D)_{\red})  \ar@{^{(}->}[r]  & K_0(\overline{X},(\pi^*D)_{\red}) \ar[r]^{\mbox{\fontsize{12pt}{22pt}\selectfont $b$}}  &
  KH_0(\overline{X},(\pi^*D)_{\red}).}
\end{align*}
The right vertical map $c$ is an isomorphism from the above. Second horizontal maps $a,b$ in top and bottom rows are injective from Proposition \ref{main prop} because the pairs $(X,D)$ and $(\overline{X},(\pi^*D)_{\red}))$ satisfy the assumption of Proposition \ref{main prop}.

Therefore we get an injective map 
\begin{equation}\label{SK0 iso}
    SK_0(X,D) \hookrightarrow SK_0(\overline{X},(\pi^*D)_{\red}).
\end{equation}
By Bloch's formula for Chow groups, we have known the following isomorphisms
\[
\CH_0(X|D;X_{\sing})\stackrel{\cong}{\longrightarrow} 
H^2_{\tau}(X,\mathcal{K}^M_{2,(X,D)})\stackrel{\cong}{\longrightarrow}
H^2_{\tau}(X,\mathcal{K}_{2,(X,D)})\stackrel{\cong}{\longrightarrow}
SK_0(X,D),
\]
where $\tau$ denotes Zariski or Nisnevich topology. The first isomorphism is due to Gupta, Krishna, and Rathore \cite[Theorem 1.5]{GKR}. The second is due to Binda, Krishna, and Saito \cite[Lemma 3.2]{BKS}. Similarly, we have 
\[
H^S_0(X\setminus D) \stackrel{\cong}{\longleftarrow} 
\CH_0(\overline{X}|(\pi^*D)_{\red})\stackrel{\cong}{\longrightarrow} SK_0(\overline{X}|(\pi^*D)_{\red}),
\]
The first isomorphism is due to Binda and Krishna \cite[Theorem 1.1]{BK}. We shall consider a commutative diagram
\begin{align*}
    \xymatrix{
  \CH_0(X|D;X_{\sing})\ar@{->>}[r] \ar[d]_\cong & \CH_0(X|D) \ar@{->>}[r] & H^S_0(X \setminus D) \ar[d]^\cong\\
  SK_0(X,D) \ar@{^{(}->}[rr] && SK_0(\overline{X},(\pi^*D)_{\red}). }
\end{align*}
The vertical maps are isomorphism from the above. The horizontal map of the bottom row is injective by \eqref{SK0 iso}. Therefore the all maps in the above diagram are isomorphisms. This completes the proof.
\end{proof}

\section{Bertini theorems containing a subscheme}\label{sec:Bertini}
In this section, we recall the Bertini theorem containing a subscheme. Here we let $k$ be an infinite field. Let $X$ be a quasi-projective scheme over $k$. We choose an immersion $X\hookrightarrow \mathbb{P}^n_{k}$ and use the same immersion when no confusion occurs. We denote the scheme theoretic closure of $X$ in $\mathbb{P}^n_{k}$ by $\overline{X}$.

\begin{dfn}
Let $X, Z\hookrightarrow \mathbb{P}^n_{k}$ be subschemes such that $Z$ is closed in $\mathbb{P}^n_{k}$. We shall say that ``a general hypersurface $H\hookrightarrow \mathbb{P}^n_{k}$ containing $Z$ satisfies properties $\mathcal{P}$(e.g., smooth, $R_n$, $S_n$, and so on) for $X$" if, for any $d\gg 0$, there is a nonempty open subscheme $\mathcal{U}\subset |H^0(\mathbb{P}^n_{k},\mathcal{I}_Z(d))|$ such that $X\cap H$ satisfies $\mathcal{P}$ for any $H\in \mathcal{U}(k)$, where $\mathcal{I}_Z$ is the defining ideal sheaf of $Z\hookrightarrow \mathbb{P}^n_{k}$ in $\mathbb{P}^n_{k}$. If we can take $d=1$, we will use the terminology ``hyperplane" instead of ``hypersurface."
\end{dfn}
The following lemma is a well-known classical result.
\begin{lem}\label{Bertini for hyperplane}
    Let $X\hookrightarrow \mathbb{P}^n_k$ be a subscheme of $\mathbb{P}^n_k$. Then a general hyperplane $H\hookrightarrow \mathbb{P}^n_k$ satisfies that $X\cap H$ is regular along $X_{\reg}$.
\end{lem}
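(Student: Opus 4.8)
The statement asks that a general hyperplane $H \hookrightarrow \mathbb{P}^n_k$ meets $X$ in a scheme that is regular along the smooth locus $X_{\reg}$. The plan is to verify the condition pointwise at each closed point $x \in X_{\reg} \cap H$ and then use a dimension count over the parameter space $|H^0(\mathbb{P}^n_k, \mathcal{O}(1))| = (\mathbb{P}^n_k)^\vee$ to show that the bad locus is contained in a proper closed subset. First I would recall the local criterion: at a smooth point $x$ of $X$ of dimension $\dim_x X = d$, the intersection $X \cap H$ is regular at $x$ precisely when $H$ does not contain the embedded tangent space $\mathbb{T}_x X \subset \mathbb{P}^n_k$, equivalently when the hyperplane $H$ meets $X$ transversally at $x$. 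This is a standard consequence of the Jacobian criterion, since cutting a smooth variety by a hyperplane that avoids the tangent direction drops the dimension by exactly one while preserving regularity.

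Next I would set up the incidence correspondence. Consider
\begin{align*}
    \Sigma = \{(x, H) \in X_{\reg} \times (\mathbb{P}^n_k)^\vee : x \in H,\ H \supseteq \mathbb{T}_x X \}.
\end{align*}
For a fixed $x \in X_{\reg}$ with $\dim_x X = d$, the fibre of the first projection $\Sigma \to X_{\reg}$ is the linear system of hyperplanes containing the $d$-dimensional projective subspace $\mathbb{T}_x X$, which has dimension $(n-1) - d$ inside $(\mathbb{P}^n_k)^\vee$. Hence $\dim \Sigma \le d + (n-1-d) = n-1$. The second projection $\Sigma \to (\mathbb{P}^n_k)^\vee$ therefore cannot dominate the target, since $\dim (\mathbb{P}^n_k)^\vee = n > n-1$. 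Its image is contained in a proper closed subscheme $Z \subsetneq (\mathbb{P}^n_k)^\vee$, and I would take $\mathcal{U}$ to be the complement of $Z$, which is nonempty because $k$ is infinite and hence $\mathcal{U}(k) \neq \varnothing$.

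Finally, for any $H \in \mathcal{U}(k)$ and any closed point $x \in X_{\reg} \cap H$, the pair $(x,H) \notin \Sigma$, so $H$ does not contain $\mathbb{T}_x X$; by the local criterion $X \cap H$ is regular at $x$. Since regularity is a local condition on closed points over the field $k$ (which we may take to be algebraically closed after base change, a harmless reduction as regularity descends along the faithfully flat extension $k \hookrightarrow \bar{k}$), this gives regularity of $X \cap H$ along all of $X_{\reg}$. The main obstacle I anticipate is purely bookkeeping: ensuring the tangent-space dimension count is uniform as $x$ ranges over components of $X_{\reg}$ of possibly different dimensions, and checking that $\Sigma$ is genuinely closed so that its image is constructible and its closure proper. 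These are routine once the incidence variety is correctly stratified by the dimension of the tangent space, so no serious difficulty arises; the essential input is simply that the tangent spaces are too small to force a dominant second projection.
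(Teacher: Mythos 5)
The paper offers no argument for this lemma at all: its ``proof'' is the single citation \cite[Lemma 2.4]{GK-Bertini}. Your incidence-correspondence argument is the classical route to such statements, and the core of it is sound: the fibre dimension $(n-1)-d$, the bound $\dim\Sigma\le n-1<n=\dim(\mathbb{P}^n_k)^{\vee}$, Chevalley constructibility, and the use of infinitude of $k$ to produce a $k$-rational hyperplane in the complement of the closed bad locus are all correct, as is the reduction of ``regular along $X_{\reg}$'' to a check at closed points.

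There is, however, one step that fails in the stated generality. The lemma (in a section assuming only that $k$ is infinite) concerns the \emph{regular} locus $X_{\reg}$, while your local criterion and your incidence variety $\Sigma$ are formulated at \emph{smooth} points via the embedded tangent space $\mathbb{T}_xX$; you then dispose of the difference by base-changing to $\bar{k}$, calling this ``harmless as regularity descends along $k\hookrightarrow\bar{k}$.'' Regularity does descend, but the locus you must control does not: over an imperfect field a point of $X_{\reg}$ need not be smooth, and the points of $X_{\bar{k}}$ lying over it need not lie in $(X_{\bar{k}})_{\reg}$ at all. Hence what you prove after base change --- regularity of $X_{\bar{k}}\cap H_{\bar{k}}$ along the smooth locus --- says nothing about $X\cap H$ at the regular non-smooth points of $X$, which is exactly where the content of the lemma sits when $k$ is imperfect. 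The honest local condition at a closed point $x\in X_{\reg}$ is that the local equation of $H$ not lie in $\mathfrak{m}_{X,x}^{2}$, and when $k(x)/k$ is inseparable the tangent-space formalism and the attendant codimension count need genuine extra care; this is precisely what the cited reference supplies. For the purposes of this paper the issue is invisible, since the main theorem assumes $k$ perfect, so $X_{\reg}$ is the smooth locus and your argument (after the stratification by $\dim_xX$ and the closure bookkeeping you already flag) is complete for everything that is actually used; but as a proof of the lemma as stated it has this gap.
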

\begin{proof}
See \cite[Lemma 2.4]{GK-Bertini}.
\end{proof}

Since we cannot find appropriate references to the next lemma, we will prove it for the author's convenience. This result is mainly used in the next section.
\begin{lem}\label{Ra-Bertini}
Let $X\hookrightarrow \mathbb{P}^n_k$ be a subscheme, and let $Z\hookrightarrow X$ be a subscheme of $X$. Assume that $X$ is $R_a$-scheme and $\codim_X(Z)>a+1$. Then a general hypersurface $H\hookrightarrow \mathbb{P}^n_{k}$ containing $\overline{Z}$ satisfies that $X\cap H$ is $R_a$-scheme, where $\overline{Z}$ is the scheme theoretic closure of $Z$ in $\mathbb{P}^n_{k}$.
   
\end{lem}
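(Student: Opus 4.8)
The plan is to prove this Bertini-type statement about preserving the $R_a$ condition by combining the Serre criterion with the classical Bertini regularity lemma already stated as Lemma \ref{Bertini for hyperplane}, applied carefully only at points of the right codimension. Recall that $X$ being $R_a$ means $X$ is regular at every point $x$ with $\dim(\mathcal{O}_{X,x}) \leq a$, equivalently $X_{\sing}$ has codimension $>a$ in $X$. The goal is to produce, for $d \gg 0$, a nonempty open $\mathcal{U}$ in the linear system $|H^0(\mathbb{P}^n_k,\mathcal{I}_{\overline{Z}}(d))|$ such that $X \cap H$ is $R_a$ for all $H \in \mathcal{U}(k)$.

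First I would set up the dimension bookkeeping. The key point is that a point $y \in X \cap H$ with $\dim(\mathcal{O}_{X\cap H, y}) \leq a$ has, after intersecting with a hypersurface, $\dim(\mathcal{O}_{X,y}) \leq a+1$ provided the hypersurface section cuts down the dimension by one at $y$. So for the $R_a$ condition on $X \cap H$ we only ever need to examine points $y$ with $\dim(\mathcal{O}_{X,y}) \leq a+1$, and at such points we must show $X \cap H$ is regular. I would split these into two cases: points $y$ lying in $X_{\reg}$, and points $y$ lying in $X_{\sing}$. For the first case, Lemma \ref{Bertini for hyperplane} already gives (after passing from a general hyperplane to a general hypersurface of large degree, by the standard Veronese re-embedding trick that converts degree-$d$ hypersurfaces through $\overline{Z}$ into hyperplanes) that a general such $H$ makes $X \cap H$ regular along $X_{\reg}$, hence regular at every $y \in X_{\reg} \cap H$; this handles all low-codimension singular points of $X\cap H$ that come from regular points of $X$.

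For the second case I would use the codimension hypothesis $\codim_X(Z) > a+1$ crucially. The points $y \in X_{\sing}$ that could cause $X \cap H$ to fail $R_a$ are those with $\dim(\mathcal{O}_{X,y}) \leq a+1$, i.e. $\codim_X(\{y\}) \leq a+1$. Since $X$ is $R_a$, the singular locus $X_{\sing}$ already has codimension $> a$ in $X$; the worrying points are those of codimension exactly $a+1$ lying in $X_{\sing}$. The condition $\codim_X(Z) > a+1$ ensures that the forced-to-contain locus $\overline{Z} \cap X$ does not interfere with these singular points, so the linear system $|\mathcal{I}_{\overline{Z}}(d)|$ still has enough freedom to avoid the finitely many (for fixed bounded codimension, the relevant points form a constructible set of controlled dimension) bad singular points of $X$. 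Concretely, I would argue that for $d \gg 0$ a general member $H$ of $|\mathcal{I}_{\overline{Z}}(d)|$ avoids all singular points of $X$ of codimension $\leq a+1$ that lie outside $\overline{Z}$, using a dimension count on the incidence variety: the subset of the linear system whose members pass through a given point of codimension $\leq a+1$ has positive codimension once $d$ is large, because imposing passage through a point not forced by $\overline{Z}$ is a nontrivial linear condition.

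The main obstacle, and the step requiring the most care, is the interplay at singular points of $X$ that lie \emph{on} $\overline{Z}$: there the hypersurface is forced to pass through $y$, so one cannot simply avoid $y$, and one must instead argue that these points do not have low codimension in $X \cap H$. This is exactly where $\codim_X(Z) > a+1$ (strict, and by one more than the naive bound) does the work: if $y \in \overline{Z} \cap X_{\sing}$ then $\codim_X(\{y\}) \geq \codim_X(Z) > a+1$, so after cutting by one hypersurface $\dim(\mathcal{O}_{X\cap H, y}) > a$, and such $y$ is simply irrelevant to the $R_a$ condition on $X \cap H$. I would therefore organize the proof so that the codimension inequality is invoked precisely to discard the forced points, while the general-position / Bertini argument handles the free points. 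Assembling the two cases, the finite intersection of the resulting nonempty open conditions in the linear system is again nonempty open over the infinite field $k$, giving the desired $\mathcal{U}$, and completing the proof.
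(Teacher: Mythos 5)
Your proposal is correct and follows essentially the same route as the paper's proof: work on $Y=X\setminus\overline{Z}$, re-embed so that degree-$(d+1)$ hypersurfaces through $\overline{Z}$ become hyperplanes, apply Lemma \ref{Bertini for hyperplane} together with avoidance of the generic points of the singular locus, and discard the points lying on $\overline{Z}$ via the hypothesis $\codim_X(Z)>a+1$. The only imprecision is that the Bertini regularity conclusion is available only along $X_{\reg}\setminus\overline{Z}$ (the linear system $|H^0(\mathbb{P}^n_k,\mathcal{I}_{\overline{Z}}(d))|$ gives an immersion only off $\overline{Z}$), not along all of $X_{\reg}$ as stated in your first case; this is harmless because your own codimension argument already discards every point of $\overline{Z}\cap X\cap H$, regular or not.
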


\begin{proof}
Set $Y:=X\setminus \overline{Z}$. Let $d$ be a sufficiently large integer such that $\mathcal{I}_{\overline{Z}}(d)$ is generated by global sections. Then $\mathcal{I}_{\overline{Z}}(d)|_Y$ is an invertible $\mathcal{O}_Y$-module sheaf generated by global sections. Hence there is a map $Y\to \mathbb{P}(|H^0(\mathbb{P}^n_{k},\mathcal{I}_{\overline{Z}}(d))|)$. Combining this map with the Segre embedding, we get 
\begin{align*}   
    Y\to 
    \mathbb{P}(|H^0(\mathbb{P}^n_{k},\mathcal{I}_{\overline{Z}}(d))|)\times_k \mathbb{P}(W) \to
     \mathbb{P}(|H^0(\mathbb{P}^n_{k},\mathcal{I}_{\overline{Z}}(d)\otimes_k W)|),
\end{align*}
where $W:=|H^0(\mathbb{P}^n_{k},\mathcal{O}(1))|$. The composition of the maps is an immersion. This map is defined by the surjective map of $\mathcal{O}_Y$-module sheaf on $Y$
\begin{align*}
    |H^0(\mathbb{P}^n_{k},\mathcal{I}_{\overline{Z}}(d))|\otimes_k W \otimes_k \mathcal{O}_Y\to 
    \mathcal{I}_{\overline{Z}}(d+1)|_Y.
\end{align*}
Moreover this map factors through the map
\begin{align*}
    |H^0(\mathbb{P}^n_{k},\mathcal{I}_{\overline{Z}}(d+1))|\otimes_k \mathcal{O}_Y \to
    \mathcal{I}_{\overline{Z}}(d+1)|_Y.
\end{align*}
So, the latter map is also surjective. Let $U\subset \mathbb{P}(|H^0(\mathbb{P}^n_{k},\mathcal{I}_{\overline{Z}}(d+1))|)$ be the open subscheme such that we can define the map on $U$
\begin{align*}
    \mathbb{P}(|H^0(\mathbb{P}^n_{k},\mathcal{I}_{\overline{Z}}(d+1))|)\to 
    \mathbb{P}(|H^0(\mathbb{P}^n_{k},\mathcal{I}_{\overline{Z}}(d))|\otimes_k W).
\end{align*}
Then we have a commutative diagram 
\begin{align*}
    \xymatrix{
    Y \ar@{^{(}->}[r]^-{i} \ar[r] \ar[rd] \ar[d]_j &
      \mathbb{P}(|H^0(\mathbb{P}^n_{k},\mathcal{I}_{\overline{Z}}(d))|\otimes_k W)  \\
      \mathbb{P}(|H^0(\mathbb{P}^n_{k},\mathcal{I}_{\overline{Z}}(d+1))|) &  U \ar@{^{(}->}[l]  \ar[u]
     }
\end{align*}
Sine $i$ is an immersion, $Y\to U$ is also an immersion, and so is $j$. By Lemma \ref{Bertini for hyperplane}, $Y\cap H$ is regular along $Y_{\reg}$ for a general hyperplane $H\hookrightarrow \mathbb{P}(|H^0(\mathbb{P}^n_{k},\mathcal{I}_{\overline{Z}}(d+1))|)$. Moreover, we can choose such $H$ that $H$ does not contain any generic point of $Y_{\sing}$. Since $Y$ is an $R_a$-scheme, $\codim_Y(Y_{\sing})\geq a+1$, and therefore $\codim_{Y\cap H}(Y_{\sing}\cap H)\geq a+1$. Viewing a section $s\in |H^0(\mathbb{P}^n_{k},\mathcal{I}_{\overline{Z}}(d+1))|$ as a section of $|H^0(\mathbb{P}^n_{k},\mathcal{O}(d+1))|$, we see that $H$ defines a hypersurface containing $\overline{Z}$ with degree $d+1$ in $\mathbb{P}^n_k$. By the assumption, $\codim_{X\cap H}(Z)\geq a+1$. Therefore $X\cap H$ is an $R_a$-scheme for a general hypersurface $H\hookrightarrow \mathbb{P}^n_{k}$ containing $\overline{Z}$.
\end{proof}

By the next lemma, we can assume that $Z$ is a closed subscheme of $\overline{X}$.
\begin{lem}\label{GK-Bertini}
    Let $X, Z\hookrightarrow \mathbb{P}^n_{k}$ be subschemes such that $Z$ is closed in $\mathbb{P}^n_{k}$. Assume that  a general hypersurface $H\hookrightarrow \mathbb{P}^n_{k}$ containing $Z\cap \overline{X}$ satisfies properties $\mathcal{P}$ for $X$. Then  a general hypersurface $H\hookrightarrow \mathbb{P}^n_{k}$ containing $Z$ satisfies properties $\mathcal{P}$ for $X$.
\end{lem}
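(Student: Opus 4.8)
The plan is to show that replacing the subscheme $Z$ by $Z\cap\overline{X}$ alters the relevant linear system only by sections vanishing on $\overline{X}$, and that such sections have no effect on the scheme $X\cap H$. First I would fix $d\gg 0$, chosen large enough both to apply the hypothesis (the genericity statement for $Z\cap\overline{X}$) and for the Serre vanishing used below. Writing $H_s\subset\mathbb{P}^n_k$ for the hypersurface defined by a section $s\in H^0(\mathbb{P}^n_k,\mathcal{O}(d))$, the crucial observation is that $X\cap H_s$ is cut out in $X\subseteq\overline{X}$ by the restriction $s|_{\overline{X}}$; hence whether $X\cap H_s$ satisfies $\mathcal{P}$ depends only on the image $s|_{\overline{X}}\in H^0(\overline{X},\mathcal{O}_{\overline{X}}(d))$.

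Next I would establish the comparison of the two linear systems. Since $\mathcal{I}_{Z\cap\overline{X}}=\mathcal{I}_Z+\mathcal{I}_{\overline{X}}$ (scheme-theoretic intersection) and $\mathcal{I}_{Z\cup\overline{X}}=\mathcal{I}_Z\cap\mathcal{I}_{\overline{X}}$, there is a short exact sequence of sheaves on $\mathbb{P}^n_k$
\[
0 \to \mathcal{I}_{Z\cup\overline{X}}(d) \to \mathcal{I}_Z(d)\oplus \mathcal{I}_{\overline{X}}(d) \to \mathcal{I}_{Z\cap\overline{X}}(d) \to 0.
\]
For $d\gg 0$ Serre vanishing gives $H^1(\mathbb{P}^n_k,\mathcal{I}_{Z\cup\overline{X}}(d))=0$, so on global sections we obtain $H^0(\mathbb{P}^n_k,\mathcal{I}_Z(d))+H^0(\mathbb{P}^n_k,\mathcal{I}_{\overline{X}}(d))=H^0(\mathbb{P}^n_k,\mathcal{I}_{Z\cap\overline{X}}(d))$. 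Setting $L:=H^0(\mathbb{P}^n_k,\mathcal{I}_{\overline{X}}(d))$, which is precisely the kernel of restriction to $\overline{X}$, this reads $H^0(\mathcal{I}_{Z\cap\overline{X}}(d))=H^0(\mathcal{I}_Z(d))+L$; in particular $H^0(\mathcal{I}_Z(d))$ and $H^0(\mathcal{I}_{Z\cap\overline{X}}(d))$ have the same image $M$ under restriction to $\overline{X}$.

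Then I would transfer the genericity. By the first step the locus of \emph{good} sections (those with $X\cap H$ satisfying $\mathcal{P}$) inside either space is saturated for the restriction map $s\mapsto s|_{\overline{X}}$, so it is the preimage of one intrinsic subset $G\subseteq M$. The hypothesis furnishes a nonempty open cone of good sections in $H^0(\mathcal{I}_{Z\cap\overline{X}}(d))$, and its image under the linear surjection onto $M$ is a nonempty open subset $G_0\subseteq G$, since a surjective linear map is an open morphism. As $H^0(\mathcal{I}_Z(d))\to M$ is also surjective by the previous paragraph, the preimage of $G_0$ is a nonempty open cone of good sections in $H^0(\mathcal{I}_Z(d))$. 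Projectivizing gives a nonempty open $\mathcal{V}\subseteq |H^0(\mathbb{P}^n_k,\mathcal{I}_Z(d))|$ over which $X\cap H$ satisfies $\mathcal{P}$, and since $k$ is infinite $\mathcal{V}(k)\neq\varnothing$; as $d\gg 0$ was arbitrary, this is exactly the desired conclusion.

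The main obstacle is the cohomological comparison, namely the surjectivity $H^0(\mathcal{I}_Z(d))\oplus H^0(\mathcal{I}_{\overline{X}}(d))\twoheadrightarrow H^0(\mathcal{I}_{Z\cap\overline{X}}(d))$ for $d\gg 0$; once this is in hand the remaining bookkeeping—saturation of the good locus and openness of linear projections—is routine. I would also be careful to choose the threshold for $d$ large enough simultaneously for the input genericity statement and for the Serre vanishing.
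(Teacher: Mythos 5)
The paper itself gives no argument here: it simply cites \cite[Lemma 2.5]{GK-Bertini}. Your proposal supplies an actual proof, and it is correct; it is also, in substance, the same comparison of linear systems that underlies the cited result. The two key points are exactly the right ones: (i) the scheme $X\cap H_s$ depends only on $s|_{\overline{X}}$, so ``goodness'' of a section is pulled back from the common image $M\subset H^0(\overline{X},\mathcal{O}_{\overline{X}}(d))$; and (ii) the Mayer--Vietoris sequence $0\to\mathcal{I}_{Z\cup\overline{X}}(d)\to\mathcal{I}_Z(d)\oplus\mathcal{I}_{\overline{X}}(d)\to\mathcal{I}_{Z\cap\overline{X}}(d)\to 0$ together with Serre vanishing of $H^1(\mathbb{P}^n_k,\mathcal{I}_{Z\cup\overline{X}}(d))$ shows that $H^0(\mathcal{I}_Z(d))$ and $H^0(\mathcal{I}_{Z\cap\overline{X}}(d))$ differ only by $L=H^0(\mathcal{I}_{\overline{X}}(d))=\Ker(\mathrm{res}_{\overline{X}})$, hence have the same image $M$. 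Two small points deserve to be made explicit in a final write-up. First, the passage back and forth between opens of the projective space $|H^0(\cdot)|$ and open cones in $H^0(\cdot)$ is harmless only because every set in sight (the good locus, the open $\mathcal{U}$, its image $G_0$) is $\mathbb{G}_m$-stable, the image of a cone under a linear map being again a cone. Second, the hypothesis only asserts goodness for $k$-rational members of $\mathcal{U}$, so to see that every $k$-point $\bar{s}$ of the open image $\pi(\mathcal{U}')\subset\mathbb{A}(M)$ actually lies in the good set $G$, one should note that the fibre $\pi^{-1}(\bar{s})\cap\mathcal{U}'$ is a nonempty open of an affine space over the infinite field $k$ and therefore contains a $k$-rational good section lifting $\bar{s}$; this is where the standing assumption that $k$ is infinite enters, in addition to its use in guaranteeing $\mathcal{V}(k)\neq\varnothing$ at the end.
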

\begin{proof}
    See \cite[Lemma 2.5]{GK-Bertini}.
\end{proof}

Finally, we describe Bertini's theorem which were shown by Kleiman and Altman(see \cite{KA}).
\begin{dfn}
    Let $X$ be a scheme over $k$, and let $F$ be a coherent $\mathcal{O}_X$-module sheaf. For each integer $r\geq 0$, we put
    \begin{align*}
    \overline{X}(F,r) &:= \{x\in X \mid \dim_{k(x)}(F\otimes_{\mathcal{O}_X}   k(x))\geq r\}\\
    X(F,r) &:= \overline{X}(F,r)\setminus \overline{X}(F,r+1).
    \end{align*}
    
\end{dfn}

\begin{thm}\label{AK-Bertini}
    Let $k$ be an infinite field. Let $X$ be a projective scheme over $k$, and let $Z\hookrightarrow X$ be a closed subscheme of $X$. Assume that $X_{\reg}$ is smooth over $k$ and the following inequality is satisfied
    \begin{align*}
\underset{e\geq 0}{\max}(\dim(Z\cap X_{\reg}(\Omega^1_{Z\cap X_{\reg}},e))+e)< \dim(X)
   \end{align*}
   Then a general hypersurface $H\hookrightarrow \mathbb{P}^n_{k}$ containing $Z$ satisfies that $H\cap X_{\reg}$ is a smooth scheme over $k$.
\end{thm}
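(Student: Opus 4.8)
The plan is to run the incidence-variety argument of Altman and Kleiman. Fix a degree $d\gg 0$ and set $\Lambda:=|H^0(\mathbb{P}^n_k,\mathcal{I}_Z(d))|$, the linear system of degree-$d$ hypersurfaces containing $Z$; here $Z$ is closed in $\mathbb{P}^n_k$ because $X$ is projective. I want to produce a dense open $\mathcal{U}\subset\Lambda$ over which $H\cap X_{\reg}$ is smooth over $k$. Away from $Z$ this is already covered by the construction used in Lemma \ref{Ra-Bertini}: for $d\gg 0$ the embedding trick in the proof of that lemma places $X_{\reg}\setminus Z$ into a projective space as a locally closed subscheme, so that Lemma \ref{Bertini for hyperplane} gives $H\cap(X_{\reg}\setminus Z)$ smooth over $k$ for general $H$. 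Hence the whole problem is to arrange smoothness of $H\cap X_{\reg}$ at the points of $Z\cap X_{\reg}$.

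First I would form the incidence variety
\[
I:=\{(x,H)\in (Z\cap X_{\reg})\times\Lambda \;:\; H\cap X_{\reg}\text{ is not smooth at }x\},
\]
and estimate the dimension of its image under $\mathrm{pr}_\Lambda\colon I\to\Lambda$, working over $\bar k$ so as to apply the Jacobian criterion. Since every member of $\Lambda$ already passes through $x\in Z$, the criterion shows that $H\cap X_{\reg}$ fails to be smooth at $x$ exactly when the differential $d_xH$ vanishes in the cotangent space $\mathfrak m_x/\mathfrak m_x^2$ of the smooth scheme $X_{\reg}$. Thus the fibre of $I\to Z\cap X_{\reg}$ over $x$ is the projectivization of $\ker\varphi_x$, where
\[
\varphi_x\colon H^0(\mathbb{P}^n_k,\mathcal{I}_Z(d))\longrightarrow \mathfrak m_x/\mathfrak m_x^2,\qquad H\longmapsto d_xH,
\]
defined after a local trivialization of $\mathcal{O}(d)$ near $x$.

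Because every section of $\mathcal{I}_Z$ vanishes on $Z$, the image of $\varphi_x$ lies in the conormal space of $Z\cap X_{\reg}$ inside $X_{\reg}$ at $x$. The conormal exact sequence
\[
\mathcal{I}_{Z\cap X_{\reg}}/\mathcal{I}_{Z\cap X_{\reg}}^2\longrightarrow \Omega^1_{X_{\reg}/k}|_{Z\cap X_{\reg}}\longrightarrow \Omega^1_{(Z\cap X_{\reg})/k}\longrightarrow 0,
\]
the smoothness of $X_{\reg}$ (so that $\Omega^1_{X_{\reg}/k}$ has rank $\dim X$), and the definition of the stratum $S_e:=(Z\cap X_{\reg})(\Omega^1_{(Z\cap X_{\reg})/k},e)$ together show that this conormal space has dimension $\dim X-e$ for $x\in S_e$. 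The key point, and the one genuinely nontrivial input, is that for $d\gg 0$ the map $\varphi_x$ is surjective onto this conormal space; that is, global sections of $\mathcal{I}_Z(d)$ realize every prescribed first-order normal direction along $Z$, uniformly in $x$. This is a Serre-vanishing (Castelnuovo--Mumford regularity) statement about the $1$-jet evaluation of $\mathcal{I}_Z(d)$. Granting it, $\ker\varphi_x$ has codimension $\dim X-e$ in the parameter space.

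It remains to assemble the dimension count. Over the stratum $S_e$ the incidence variety has dimension
\[
\dim\bigl(I|_{S_e}\bigr)=\dim\Lambda+\dim S_e+e-\dim X ,
\]
so the hypothesis $\dim S_e+e<\dim X$ for every $e\geq 0$ yields $\dim I<\dim\Lambda$. Hence $\mathrm{pr}_\Lambda(I)$ lies in a proper closed subset $B\subsetneq\Lambda$, and $B$ is defined over $k$ since $I$ and $\mathrm{pr}_\Lambda$ are. As $k$ is infinite, $\Lambda\setminus B$ has a $k$-point, and intersecting with the open locus from the away-from-$Z$ analysis produces the desired dense open $\mathcal{U}$; for every $H\in\mathcal{U}(k)$ the scheme $H\cap X_{\reg}$ is smooth over $k$. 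The main obstacle is precisely the uniform jet-surjectivity of $\mathcal{I}_Z(d)$ for $d\gg 0$; the remaining steps are the dimension bookkeeping that reproduces the stated inequality term by term.
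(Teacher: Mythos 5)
The paper does not prove this statement at all: it is quoted verbatim from Kleiman--Altman and the ``proof'' is the citation \cite[Theorem 7]{KA}. Your proposal is, in effect, a reconstruction of the argument inside that reference, and the reconstruction is correct in structure: stratify $Z\cap X_{\reg}$ by the loci $S_e$ where $\Omega^1_{Z\cap X_{\reg}}$ has fibre rank $e$, observe via the Jacobian criterion and the conormal sequence that the singular members through a point $x\in S_e$ form a linear subsystem of codimension $\dim X-e$ (granting jet surjectivity), and conclude $\dim I\le \dim\Lambda+\max_e(\dim S_e+e)-\dim X<\dim\Lambda$, so the closure of $\mathrm{pr}_\Lambda(I)$ is a proper closed subset avoided by a $k$-point since $k$ is infinite (note the image itself need not be closed, as $X_{\reg}$ is only open in $X$, so you do want that closure). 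The one substantive point you assert rather than prove --- that for $d\gg 0$ the evaluation $\varphi_x\colon H^0(\mathbb{P}^n_k,\mathcal{I}_Z(d))\to \mathfrak m_x/\mathfrak m_x^2$ hits the full conormal image of $\mathcal{I}_{Z\cap X_{\reg}}/\mathcal{I}^2_{Z\cap X_{\reg}}$ \emph{uniformly in} $x$ --- is precisely the technical heart of Kleiman--Altman (their Theorems 1--2), and it is not just pointwise Serre vanishing: the $d$ must be chosen independently of $x$, which is obtained from Castelnuovo--Mumford regularity applied to the bounded family of sheaves $\mathcal{I}_Z\cdot\mathcal{I}_x^2$ (or a universal construction over $Z$). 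Since you identify this input explicitly and it is true, I would call your write-up a correct proof modulo that standard lemma, and in substance the same proof the paper is importing by citation; if you wanted a self-contained account you would need to supply the uniform $1$-jet surjectivity, not merely name it.
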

\begin{proof}
    See \cite[Theorem 7]{KA}.
\end{proof}

\section{Proof of the main theorem (2)}\label{sec:proof of 2}

\subsection{Proof of the main theorem (2): for infinite fields}

  First, we will prove our claim for the case that $k$ is an infinite field.
\begin{proof}
  Let $p:C \to X$ be a finite map and let $f\in \Ker(\mathcal{O}^{\times}_{C,p^{*}D}\to \mathcal{O}^{\times}_{(p^{*}D)_{\red}})$, which give rational equivalence of Suslin homology $H^S_0(X\setminus D)$. That is, $C$ is an integral normal curve, the image of $p$ is not contained in $D$, and $p_*(\divisor(f))=0\in H^S_0(X\setminus D)$. To prove the injectivity of the canonical map $\CH_0(X|D)\to H^S_0(X\setminus D)$, it suffices to show that $p_*(\divisor(f))=0\in \CH_0(X|D)$.

We shall consider the following diagram:
\begin{align*}
  \xymatrix{
C \ar@{^{(}->}[r] \ar[dr]_-{p} & C\times_{k}X \ar[d]^-{\pi} \\
& X.}  
\end{align*}
 Since $C$ is smooth and there is a pushforward map
 \[
 \pi_*: \CH_0(C\times_{k}X|C\times_{k}D)\to \CH_0(X|D),
 \]
 we may assume that $p: C\to X$ is a closed immersion.

If $\dim(X)\leq 2$, our claim has been proved in the Section \ref{sec:proof of 1}. So we may assume that $\dim(X)> 2$, and hence $\dim(D)> 1$. Set a closed immersion $X\hookrightarrow \mathbb{P}^n_k$. Since $k$ is infinite, by the Bertini theorem containing $C$ for $X\hookrightarrow \mathbb{P}^n_k$ (see Theorem \ref{AK-Bertini}), a general hypersurface $H\hookrightarrow \mathbb{P}^n_k$ containing $D$ satisfies that $X\cap H$ is a smooth projective scheme. Moreover, by the Bertini theorem containing $C\cap D$ for $D\hookrightarrow \mathbb{P}^n_k $ (see Lemma \ref{Ra-Bertini}), a general hypersurface $H\hookrightarrow \mathbb{P}^n_k$ containing $D$ satisfies that $D\cap H$ is an $R_0$-scheme. Note that $D$ is an $R_0$-scheme by the hypothesis that $D$ is reduced. Furthermore, since $\dim(D\cap C)=0$ and  $\codim_D(D\cap C)> 1 $, we can use Lemma \ref{Ra-Bertini}. Now $X$ is smooth, in particular Cohen-Macaulay, and $D\hookrightarrow X$ is an effective Cartier divisor. Hence $D$ is also Cohen-Macaulay, and so is $D\cap H$ for a general hypersurface $H\hookrightarrow \mathbb{P}^n_k$. Thus a general hypersurface $H\hookrightarrow \mathbb{P}^n_k$ containing $D\cap C$ satisfies that $D\cap H$ is reduced. Using Lemma \ref{GK-Bertini}, we conclude that a general hypersurface $H\hookrightarrow \mathbb{P}^n_k$ containing $C$ satisfies that $X\cap D$ is smooth and $D\cap H$ is reduced. Iterating the same argument, finally we get a (regular) close immersion $\pi:X'\hookrightarrow X$ such that 
\begin{enumerate}
    \item $X'$ is projective smooth surface over $k$.
    \item $D':=X'\cap D\hookrightarrow X'$ is a reduced  effective Cartier divisor on $X'$.
    \item $X'$ contains $C$.
\end{enumerate}
\begin{align*}
    \xymatrix{
    C \ar@{^{(}->}[r]^{p'} \ar@(ur,ul)@{^{(}->}[rr]^p &
    X' \ar[r]^{\pi} & X
    }
\end{align*}
Since $p'_*(\divisor(f))=0\in H_0(X'\setminus D')$ holds by the definition, we get $p'_*(\divisor(f))=0\in \CH_0(X'|D')$ by Theorem \ref{main theorem} for a surface case. Hence we have
\[ 
p_*(\divisor(f))=\pi_*(p'_*(\divisor(f)))=0\in \CH_0(X|D). 
\]
This completes the proof of the main theorem when $k$ is an infinite field.
\end{proof}

\subsection{Proof of the main theorem (2): for finite fields}

Next, we assume that $k$ is a finite field. We put $X_F:=X\times_k F$ and $D_F:=X\times_k D$ for a field extension $F/k$. 
\begin{proof}
    Choose two primes $\ell_1\neq \ell_2$ different from char$(k)$. Let $k_i/k$ be the pro-$\ell_i$ field extension, that is $k_i=\underset{|F:k|=\ell_i^{s}}{\bigcup}F$.
Then we get canonical isomorphisms
\[
    \CH_0(X_{k_i}|D_{k_i})\cong 
    \underset{|F:k|=\ell_i^{s}}{\varinjlim}\CH_0(X_F|D_F), \quad
     H^S_0(X_{k_i}\setminus D_{k_i})\cong 
    \underset{|F:k|=\ell_i^{s}}{\varinjlim}H^S_0(X_F\setminus D_F),
\]
and a commutative diagram
\begin{align*}
    \xymatrix{
    \CH_0(X|D) \ar[r] \ar[d] & H^S_0(X\setminus D) \ar[d]\\
     \CH_0(X_{k_i}|D_{k_i}) \ar[r]  & H^S_0(X_{k_i}\setminus D_{k_i}). 
    }
\end{align*}

Let $\alpha \in \CH_0(X|D)$ be an element such that $\alpha=0\in H^S_0(X\setminus D)$. The above isomorphism implies that, for $i=1,2$, there exists a finite field extension $F_i/k$ whose degree is a power of $\ell_i$ such that $\alpha=0\in \CH_0(X_{F_i}|D_{F_i})$. 

We shall consider the following commutative diagram:
\begin{align*}
    \xymatrix{
    \CH_0(X|D) \ar[r] \ar[d] & H^S_0(X\setminus D) \ar[d]\\
     \CH_0(X_{F_i}|D_{F_i}) \ar[r] \ar[d] & H^S_0(X_{F_i}\setminus D_{F_i}) \ar[d]\\
 \CH_0(X|D) \ar[r]& H^S_0(X\setminus D)   
    }
\end{align*}

By the projection formula, the composition of vertical maps of the left and the right are multiplication by $\ell_i ^{n_i}$, where  $\ell_i ^{n_i}=|F_i:k|$ for $i=1,2$ (see \cite[Theorem 8.5]{Gupta-Krishna} and \cite[after Examples 2.7]{Schmidt}). Thus we get $\ell ^{n_i} \alpha=0\in \CH_0(X|D)$ for $i=1,2$. Since $\ell_1$ and  $\ell_2$ are different primes, we see that $\alpha=0\in \CH_0(X|D)$. This completes the proof of the finite field case .
\end{proof}

\end{document}